 %%%%%%%%%%%%%%%%%%%%%%%% Ams-Style %%%%%%%%%%%%%%%%%%%%%%%%%%%%%%%%%%%
%%%
%%%                   Style and Inputs
%%%%%%%%%%%%%%%%%%%%%%%%%%%%%%%%%%%%%%%%%%%%%%%%%%%%%%%%%%%%%%%%%%%%%%

\documentclass[12pt]{amsart}
\usepackage{amssymb}
\usepackage{amsmath,amssymb,amsfonts,amsthm,graphics,
latexsym, amscd, amsfonts, epsfig, eepic,epic,psfrag,setspace,eufrak}
\usepackage{mathrsfs}
\usepackage{url}
\usepackage{color}
\usepackage{eucal}%    caligraphic-euler fonts: \mathcal{ }
\usepackage{eufrak}%   frak-euler        fonts: \mathfrak{ }
\usepackage[all]{xypic}
\usepackage{xspace}
\usepackage{natbib}
%%%
%%%
%%%%%%%%%%%%%%%%%%%%%%%%% Pagestyle %%%%%%%%%%%%%%%%%%%%%%%%%%%%%%%%%%%%%%
%%%
%%%
%%%

% spacing between lines

%\hoffset=0truecm
%\voffset=0truecm
\textwidth=15truecm
\textheight=18truecm
\baselineskip=0.8truecm
\overfullrule=0pt
\parskip=0.8\baselineskip
\parindent=0truecm
\topmargin=0.5truecm \headsep=1.2truecm
%\oddsidemargin=0.5in % options for double-side printouts
%\evensidemargin=0in

%%%
%%%
%%%%%%%%%%%%%%%%%%%% New Settings %%%%%%%%%%%%%%%%%%%%%%%%%%%%%%%%%%%%%%%
%%%
%%%

\theoremstyle{plain}
\newtheorem{theorem}{Theorem}
\newtheorem{corollary}{Corollary}

\newtheorem{lemma}{Lemma}

\theoremstyle{definition}
\newtheorem{definition}{Definition}

\theoremstyle{remark}

\numberwithin{equation}{section}

% references alphabetically
% make relation clearer

\begin{document}
\doublespacing

\title{Topology of RNA-RNA interaction structures}
\author{J{\o}rgen E. Andersen$^{1}$, Fenix W.D. Huang$^{2}$,  Robert C. Penner$^{3, 4}$ and
Christian M. Reidys$^{2 \star}$}

\maketitle

\begin{center}
$1$ Aarhus University, DK-8000 {\AA}rhus C, Denmark \\
$2$ University of Southern Denmark, Campusvej 55, DK-5230 Odense M, Denmark \\
$3$ Center for Quantum Geometry of Moduli Spaces Aarhus University, DK-8000 {\AA}rhus C, Denmark \\
$4$ Math and Physics Departments, California Institute of Technology, Pasadena, California, USA \\
$\star$ Correspondent author: phone: +45 24409251 \\ duck@santafe.edu
\end{center}

\centerline{\bf Abstract}
\qquad \: 
The topological filtration of interacting RNA complexes is studied
and the role is analyzed of certain diagrams called irreducible
shadows, which form suitable building blocks for more general
structures.
We prove that for two interacting RNAs, called interaction structures,
there exist for fixed genus only finitely many irreducible shadows.
This implies that for fixed genus there are only finitely many classes
of interaction structures.
In particular the simplest case of genus zero already
provides the formalism for certain types of structures that occur
in nature and are not covered by other filtrations.
This case of genus zero interaction structures is already of
practical interest, is studied here in detail and found to be
expressed by a multiple context-free grammar extending the usual
one for RNA secondary structures.
We show that in $O(n^6)$ time and $O(n^4)$ space complexity,
this grammar for genus zero interaction structures
provides not only minimum free energy solutions but also
the complete partition function and base pairing probabilities.

{\bf Keywords}: RNA interaction structure, topological genus,
irreducible shadow, partition function
% \PACS{PACS code1 \and PACS code2 \and more}
% \subclass{MSC code1 \and MSC code2 \and more}

\section{Introduction}\label{S:Introduction}

RNA-RNA interactions constitute one of the fundamental mechanisms of
cellular regulation. For instance, small RNAs binding a larger (m)RNA
target include: the regulation of translation in both prokaryotes
\citet{Vogel:07} and eukaryotes \citet{McManus,Banerjee}, the targeting
of chemical modifications \citet{Bachellerie}, insertion editing
\citet{Benne} and transcriptional control \citet{Kugel}.
For a variety of RNA classes including miRNAs, siRNAs, snRNAs, gRNAs, and
snoRNAs, a salient feature is the formation of RNA-RNA interaction structures
that are far more complex than simple sense-antisense interactions.
Accordingly, the ability to predict the details of RNA-RNA interactions
in terms of the thermodynamics of binding and in its structural consequences
is a necessary prerequisite to understanding RNA-based regulation mechanisms.
The exact location of the binding and the subsequent impact of the interaction
on the structure of the target molecule has potentially profound biological
consequences.
In case of sRNA-mRNA interactions, such details determine whether
the sRNA is a positive or negative regulator of transcription depending on
whether binding exposes or covers the Shine-Dalgarno sequence
\citet{Sharma:07,Majdalani:02}. Effects along these lines have been observed
also using artificially designed opener and closer RNAs that regulate the
binding of the \emph{HuR} protein to human mRNAs
\citet{Meisner:04a,Hackermueller:05a}.

An RNA molecule is a linearly oriented sequence of four types of nucleotides, namely,
{\bf A}, {\bf U}, {\bf C}, and {\bf G}. This sequence is endowed with a
well-defined orientation from the $5'$- to the $3'$-end and referred to as
the backbone.
Each nucleotide can form a base pair by interacting with at most one other
nucleotide by establishing hydrogen bonds. Here we restrict ourselves to
Watson-Crick base pairs {\bf GC} and {\bf AU} as well as the wobble base
pairs {\bf GU}. In the following, base triples as well as other types of more
complex interactions are neglected. RNA structures can be presented as
diagrams by drawing the backbone horizontally and all base pairs as arcs
in the upper halfplane; see Figure~\ref{F:RNAp}.
This set of arcs provides our coarse-grained RNA structure in
particular ignoring any spatial embedding or geometry of the molecule
beyond its base pairs.
%%%%%%%%%
%%%%%%%%%%%%%%%%%%%%%%%%%%%%%%%%%%%%%%%%%%%%%%%%%%
%%%%%%%%%
\begin{figure}[ht]
\centerline{\epsfig{file=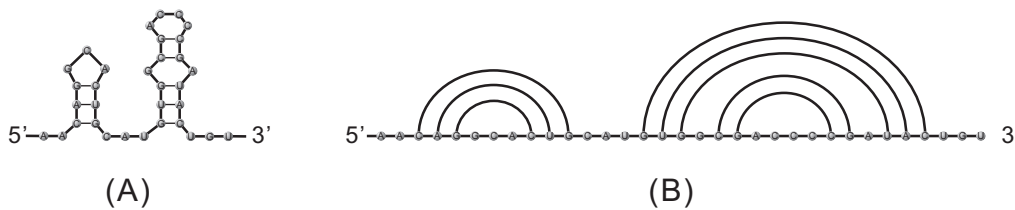,width=0.9\textwidth} \hskip8pt}
\caption{\small (A) An RNA secondary structure and (B) its diagram
representation.
}\label{F:RNAp}
\end{figure}
%%%%%%%%%
%%%%%%%%%%%%%%%%%%%%%%%%%%%%%%%%%%%%%%%%%%%%%%%%%%
%%%%%%%%%
Accordingly, particular classes of base pairs translate into specific
structure categories, the most prominent of which are secondary
structures \citet{Kleitman:70,Nussinov:1978,Waterman:78,Waterman:79a}. When represented as diagrams,
secondary structures have only non-crossing base
pairs (arcs).
Beyond RNA secondary structures are the RNA pseudoknot structures that
allow for cross serial interactions \citet{Rivas}. There are several
meaningful filtrations of cross-serial interactions
\citet{Orland:02,Reidys:11a,Reidys:10w}.
Given an RNA coarse-grained structure class together with an energy
function, ``folding'' an RNA sequence means to compute a
minimum\footnote{with respect to the {\it a priori} specified energy
function} free energy configuration (MFE) or a partition function for
the sequence.

RNA interaction structures are structures over two backbones. We
distinguish internal arcs and external arcs as having their endpoints on
the same and different backbones, respectively.
Interaction structures are represented as two backbones with internal
and external arcs drawn in the upper halfplane. Alternatively, they can
be represented by drawing the two backbones on top of each other, see
Figure~\ref{F:diag_represent}.
%%%%%%%%%
%%%%%%%%%%%%%%%%%%%%%%%%%%%%%%%%%%%%%%%%%%%%%%%%%%
%%%%%%%%%
\begin{figure}[ht]
\centerline{\epsfig{file=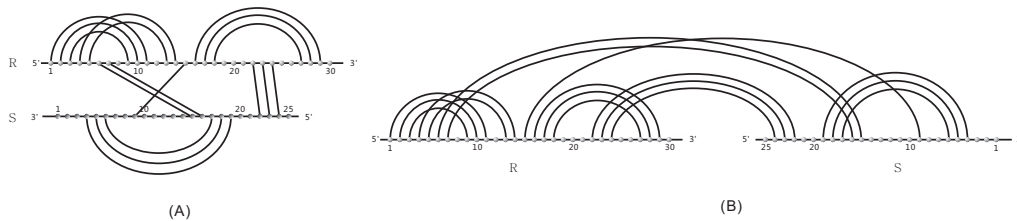,width=0.9\textwidth} \hskip8pt}
\caption{\small (A) Diagram representation of an RNA-RNA interaction structure.
(B) The representation of (A) with the two backbones drawn on a horizontal line.
}\label{F:diag_represent}
\end{figure}
%%%%%%%%%
%%%%%%%%%%%%%%%%%%%%%%%%%%%%%%%%%%%%%%%%%%%%%%%%%%
%%%%%%%%%

The simplest approach for folding RNA-RNA interaction structures concatenates
two (or more) interacting sequences one after another remembering the
specific merge point (cut-point) and then employs the standard secondary
structure folding algorithm on a single strand with a slightly modified
energy model that treats loops containing cut-points as external elements.
The software tools
\texttt{RNAcofold} \citet{Hofacker,Bernhart},
\texttt{pairfold} \citet{Andronescu} and \texttt{NUPACK} \citet{Dirks:07}
subscribe to this strategy. This approach falls short predicting many
important motifs such as kissing-hairpin loops. The paradigm of concatenation
has also been generalized to include cross-serial interactions \citet{Rivas}.
The resulting model, however, still does not generate all relevant interaction
structures \citet{Backofen,Reidys:frame}.
An alternative line of thought, implemented in \texttt{RNAduplex} and
\texttt{RNAhybrid} \citet{rehmsmeier:04}, is to neglect all internal
base pairings in either strand, i.e., to compute the minimum free
energy (MFE) secondary structure of hybridization of otherwise
unstructured RNAs. \texttt{RNAup}
\citet{Mueckstein:05a,Mueckstein:08a} and \texttt{intaRNA}
\citet{Busch:08} restrict interactions to a single interval that
remains unpaired in the secondary structure for each partner. As a
special case, snoRNA/target complexes are treated more efficiently
using a specialized tool \citet{Tafer:09x} due to the highly
conserved interaction motif. Algorithmically, the approaches
mentioned so far are close relatives of the ``classical''
RNA folding recursions given by \citet{Zuker:84,Waterman:78}.
A different approach was taken independently by \citet{Pervouchine:04}
and \citet{Alkan:06}, who proposed MFE folding algorithms for predicting
the \emph{AP-structure} of two interacting RNA molecules. In this
model, the intramolecular structures of each partner are pseudoknot-free,
the intermolecular binding pairs are non-crossing, and there is no
so-called ``zig-zag'' motif, see Sec.~\ref{S:facts}.
The optimal joint structure can be computed in $O(N^6)$
time and $O(N^4)$ space by means of dynamic programming.

In contrast to the RNA secondary folding problem, where minimum energy
folding and partition functions can be obtained by  similar algorithms,
the case of interaction structures is more involved. The reason is that
simple unambiguous grammars are known for RNA secondary structures
\citet{Dowell:04} while the disambiguation of grammar underlying the
Alkan-Pervouchine algorithm requires the introduction of a large number
of additional non-terminals (which algorithmically translate into
additional dynamic programming tables).
The partition function was derived independently by \citet{Backofen}
(\texttt{piRNA}) and \citet{rip:09} (\texttt{rip1}). In \citet{Huang:10a},
probabilities of interaction regions as well as entire hybrid blocks
were derived. Although the partition function of joint structures can
also be computed in $O(N^6)$ time and $O(N^4)$ space, the current
implementations require  large computational resources.
\citet{Backofen:fast} recently achieved a substantial
speed-up making use of the observation that the external
interactions mostly occur between pairs of unpaired regions of
single structures. \citet{Chitsaz:09}, on the other hand, use
tree-structured Markov Random Fields to approximate the joint
probability distribution of multiple $(\geq 3)$ contact regions.
The RNA-RNA interaction structures of
\citet{Huang:10a,Alkan:06,Hofacker,Bernhart} have the following features:
\begin{itemize}
\item when drawing the two backbones on top of each other,
      all base pairs are non-crossing, i.e., no pseudoknots
      formed by internal or external arcs are allowed,
\item zig-zag motifs are disallowed.
\end{itemize}

This paper will relax the above constraints and propose a novel
filtration of RNA-RNA interaction structures based on the topological
fitration of RNA interaction structures.
Interaction structures that do not belong to the Alkan-Pervouchine class
exist: for instance the integral RNA (hTER) of the human telomerase
ribonucleoprotein has a conserved secondary structure that contains a
potential pseudoknot \citet{trans-pseudo}.
There is evidence that the two conserved
complementary sequences of one stem of the hTER pseudoknot domain
can pair intermolecularly in vitro, and that formation of this stem
as part of a novel ``transpseudoknot'' is required for the telomerase
to be active in its dimeric form.
The classification and expansion of pseudoknotted RNA structures
over one backbone via topological genus of the associated fatgraph
were first proposed by \citet{Orland:02,Penner:03,Bon:08}

In \citet{Reidys:11a,Zagier:95}, it was proved that for \emph{any} genus, there are only
\emph{finitely} many shadows, i.e., particular, simple atomic motifs.
In case of genus one, these shadows were first presented in \citet{Bon:08}.
Shadows give rise to a novel structure class, naturally generalizing
RNA secondary structures. These $\gamma$-structures \citet{Reidys:11a} are
generated by concatenation and nesting of irreducible building blocks
of genus $\le\gamma$.
We shall present the topological classification of RNA-RNA interaction
structures. This filtration gives rise to the notion of
$\gamma$-structures over two backbones.
In analogy to their one-backbone counterparts, $\gamma$-structures
over two backbones are composed of irreducible building blocks of genus
$\le \gamma$ and have accordingly arbitrarily high genus.
We shall see that for any fixed genus, there are only finitely
many irreducible shadows over two backbones.
In particular, we study genus zero structures over two backbones. The latter
are the two backbone analogue of RNA secondary structures\footnote{which are
well-known to be genus zero structures over one backbone}. $0$-structures
over two backbones already exhibit interesting features not shared with
AP-structures, see Figure~\ref{F:chy}.
%%%%%%%%%
%%%%%%%%%%%%%%%%%%%%%%%%%%%%%%%%%%%%%%%%%%%%%%%%%%
%%%%%%%%%
\begin{figure}[ht]
\centerline{\epsfig{file=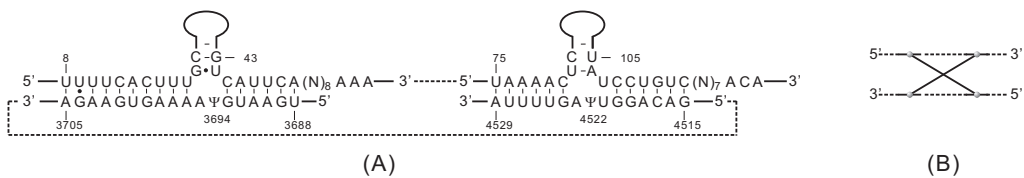,width=0.9\textwidth} \hskip8pt}
\caption{\small (A) Homo sapiens ACA27 snoRNA.
This H/ACA box RNA was cloned \citet{Kiss:04,Ofengand:97} from a HeLa
cell extract immunoprecipitated with an anti-GAR1 antibody. (B) The structure
contains two crossing hybrids, which cannot be found in AP-structures.
}\label{F:chy}
\end{figure}
%%%%%%%%%
%%%%%%%%%%%%%%%%%%%%%%%%%%%%%%%%%%%%%%%%%%%%%%%%%%
%%%%%%%%%
We furthermore derive an unambiguous grammar for $0$-structures over
two backbones, which translates into an efficient dynamic programming algorithm.
This grammar, illustrated in Figure~\ref{F:outline}, allows the calculation of the minimum free
energy, partition function and Boltzmann-sampling. It explicitly treats
hybrids and gap structures, i.e., maximal regions with exclusively
intermolecular interactions and maximal regions with base pairs over one
backbone. The grammar thus facilitates the computation of the probability of
hybrids, the target interaction probability between two RNA strands, and the
probability of gap structures.
%%%%%%%%%
%%%%%%%%%%%%%%%%%%%%%%%%%%%%%%%%%%%%%%%%%%%%%%%%%%
%%%%%%%%%
\begin{figure}[ht]
\centerline{\epsfig{file=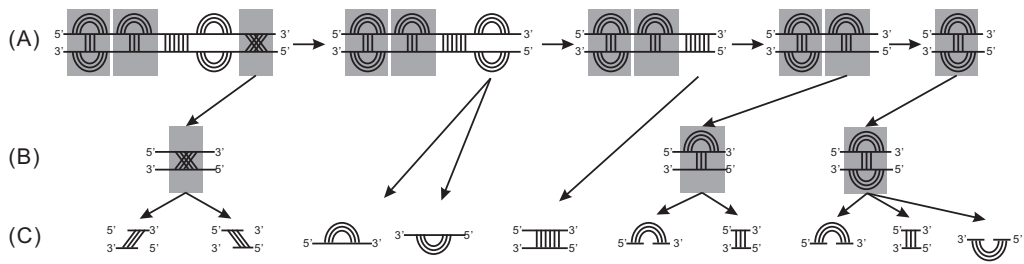,width=0.9\textwidth} \hskip8pt}
\caption{\small An unambiguous grammar of RNA-RNA interaction structures of
genus zero over two backbones. Basic building blocks are: tight structures
(gray), secondary structures and hybrid structures (A). Only tight structures
exhibit cross-serial interactions (B) and are further decomposed (C).
}\label{F:outline}
\end{figure}
%%%%%%%%%
%%%%%%%%%%%%%%%%%%%%%%%%%%%%%%%%%%%%%%%%%%%%%%%%%%
%%%%%%%%%

%%%
%%%%%%%%%%%%%%%%%%%%%%%%%%%%%%%%%%%%%%%%%%%%%%%%%%%%%%%%%%%%%%%%%%%%%%%%%%
%%%
\section{Basic facts}\label{S:facts}
%%%
%%%%%%%%%%%%%%%%%%%%%%%%%%%%%%%%%%%%%%%%%%%%%%%%%%%%%%%%%%%%%%%%%%%%%%%%%%
%%%

%%%
%%%%%%%%%%%%%%%%%%%%%%%%%%%%%%%%%%%%%%%%%%%%%%%%%%%%%%%%%%%%%%%%%%%%%%%%%%
%%%
\subsection{Diagrams}
%%%
%%%%%%%%%%%%%%%%%%%%%%%%%%%%%%%%%%%%%%%%%%%%%%%%%%%%%%%%%%%%%%%%%%%%%%%%%%
%%%

A diagram is a labeled graph over the vertex set $[n]=\{1, \dots, n\}$ in
which each vertex has degree $\le 3$, represented by drawing its vertices
in a horizontal line and its arcs $(i,j)$, where $i<j$, in the upper
half-plane.
A backbone is a sequence of consecutive integers contained in $[n]$.
A diagram over $b$ backbones is a diagram together with a partition
of $[n]$ into $b$ backbones, see Figure~\ref{F:RNAp} (B).
In the following we shall denote the set of diagrams over one and two
backbones by $\mathbb{D}$ and $\mathbb{E}$ respectively.

The vertices and arcs of a diagram correspond to nucleotides and base pairs,
respectively.
For a diagram over $b$ backbones, the leftmost vertex of each backbone
denotes the $5'$ end of the RNA sequence, while the rightmost vertex
denotes the $3'$ end.
In case of $b>1$, we shall distinguish two types of arcs: an arc is called
{exterior} if it connects different backbones and {interior}
otherwise. Diagrams over $b$ backbones without exterior arcs are
disjoint unions of diagrams over one backbone.

The particular case $b=2$ is referred to as RNA interaction structures
\citet{rip:09,Huang:10a}, see Figure~\ref{F:diag_represent} (A).
As mentioned before, interaction structures are oftentimes represented
alternatively by drawing the two backbones $R$ and $S$ on top of each
other, indexing the vertices $R_1$ to be the $5'$ end of $R$ and $S_1$
to be the $3'$ of $S$.

A zig-zag is defined as follows:
given two sequences $R$ and $S$, suppose that  $R_aS_b$, (i.e., $R_a$ is base paired with $S_b$), $R_iR_j$, and $S_{i'}S_{j'}$
with
$i<a<j$ and $i'<b<j'$.  We say that $R_iR_j$ is subsumed in
$S_{i'}S_{j'}$, if for any $R_{k}S_{k'}\in I$, $i<k<j$ implies
$i'<k'<j'$.  Finally, a zigzag is a subgraph containing two dependent
interior arcs $R_{i_1}R_{j_1}$ and $S_{i_2}S_{j_2}$ neither one
subsuming the other, see Figure~\ref{F:zigzag}, where dependence here means that
there exists at least one exterior arc $R_{h}S_{\ell}$ such that
$i_1<h<j_1$ and $i_2<\ell<j_2$.

%%%%%%%%%
%%%%%%%%%%%%%%%%%%%%%%%%%%%%%%%%%%%%%%%%%%%%%%%%%%
%%%%%%%%%
\begin{figure}[ht]
\centerline{\epsfig{file=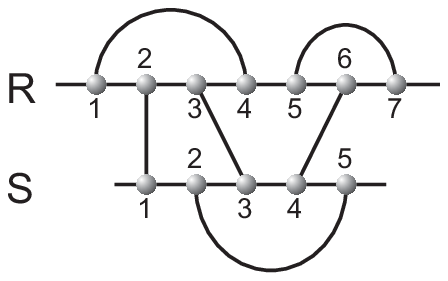,width=0.33\textwidth} \hskip8pt}
\caption{\small A zig-zag structure. $R_1R_4$ and $S_2S_5$ are dependent
interior arcs owing to  the base pair
 $R_3S_3$, but in view of $R_2S_1$ and $R_6S_4$, neither
subsumes the other.
}\label{F:zigzag}
\end{figure}
%%%%%%%%%
%%%%%%%%%%%%%%%%%%%%%%%%%%%%%%%%%%%%%%%%%%%%%%%%%%
%%%%%%%%%

%%%
%%%%%%%%%%%%%%%%%%%%%%%%%%%%%%%%%%%%%%%%%%%%%%%%%%
%%%

\subsection{From diagrams to topological surfaces}

%%%
%%%%%%%%%%%%%%%%%%%%%%%%%%%%%%%%%%%%%%%%%%%%%%%%%%
%%%

One approach for deriving meaningful filtrations of RNA structure is
to pass from diagrams to topological surfaces \citet{Massey:69}.
It is natural to make this transition from combinatorics to topology
via fatgraphs \citet{protein,Penner:11}. A fatgraph $\mathbb{G}$, sometimes also called
``ribbon graph'' or ``map'', is a graph $G$ together with a collection
of cyclic orderings, called a fattening, one such ordering on the half-edges
incident on each vertex.
Each fatgraph $\mathbb{G}$ determines an oriented surface $F(\mathbb{G})$
as follows: let $V(G)$ be the set of $G$-vertices and $E(G)$ be the set
of $G$-edges. For each $v\in V(G)$, consider an oriented surface
isomorphic to a polygon $P_v$ with $2k$ sides containing $v$ in its interior where
$k$ is the valence of $v$. The incident edges of $v$ are also incident to
a univalent vertex contained in alternating sides of $P_v$, which are
identified with the incident half-edges in the natural way so that the
induced counter-clockwise cyclic ordering on the boundary of $P_v$ agrees
with the fattening of $\mathbb{G}$ about $v$.
The surface $F(\mathbb{G})$ is the quotient of the disjoint union
$\sqcup_{v\in V(G)}P_v$, where the frontier edges, which are oriented with
the polygons on their left, are identified by an orientation-reversing
homeomorphism if the corresponding half-edges lie in a common edge of $G$.
This defines the oriented surface $F(\mathbb{G})$, which is connected if and only if
$G$ is and is uniquely determined in this case by its genus $g=g(G)\ge 0$ and number $r=r(G)\ge 1$
of boundary components.
Since $F(\mathbb{G})$ contains $G$ as a deformation
retract, they share the Euler characteristic $v-e$, and the genus  of $F(\mathbb{G})$ is given by $2-2g-r=v-e$.
%; furthermore, the surface obtained from $F(\mathbb{G})$
%by attaching a disk to each boundary component has Euler characteristic $r-e+v$ and in particular has the
%same genus given by $2-2g=r-v-e$ as $F(\mathbb{G})$.

For an RNA diagram, we may draw a representation as usual so that the backbone is a horizontal line oriented  from
left to right, and the arcs lie in the upper half-plane.  This determines a unique fattening on any diagram, cf.\  the
leftmost two panels in Figure \ref{F:inflation} for the fatgraph and its corresponding surface.
Each boundary component of $F(\mathbb{G})$ determines a closed edge-path  or cycle on $G$, oriented with the surface
lying on its left.  In particular,  a neighborhood of each edge inherits an orientation
from that of $F(\mathbb{G})$ which combine to give the oriented cycles as depicted in the third panel of
Figure \ref{F:inflation}.  Without affecting topological type of the constructed surface, one
may collapse each backbone to a single vertex with the induced fattening called the polygonal model of the
RNA, as illustrated in the rightmost
panels in Figure \ref{F:inflation}.  It is the orientation of each
backbone from the $5$'end to the $3'$ end that allows us to transform the
fatgraph of an RNA-structure or RNA-interaction into a fatgraph with one or
two vertices.

\begin{figure}[ht]
\centerline{\epsfig{file=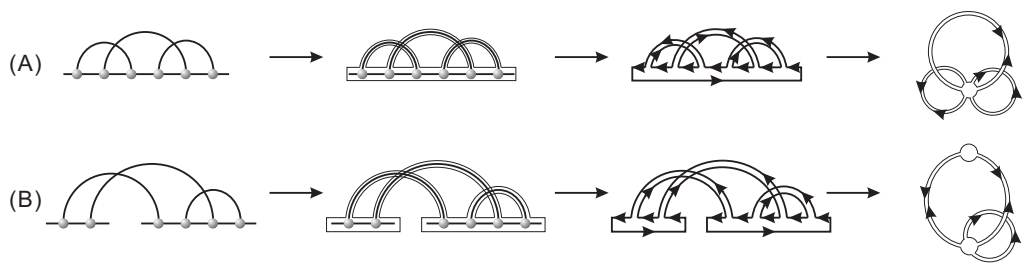,width=0.9\textwidth} \hskip8pt}
\caption{\small (A) The fatgraph of a diagram and its reduction to a single
vertex.  Contracting the backbone of a diagram into a single vertex decreases the length
of the boundary components and preserves the genus.
(B) Inflation of edges and vertices to ribbons and discs, as well as
walking along the boundary components. Here we have six vertices, seven edges
and one boundary component. The corresponding surface has Euler characteristic
$\chi=v-e=-1$ and $g=1$. At the last step, we collapse each backbone into a
single disc again preserving genus. The backbone of the polymer can be
recovered by inflating each disk to a backbone segment.
}\label{F:inflation}
\end{figure}

This backbone-collapse preserves
orientation, Euler characteristic and genus by construction. It is reversible by
inflating each vertex to form a backbone.
Using the collapsed fatgraph representation, we see that for a connected diagram
over $b$ backbones, the genus $g$ of the surface (with boundary) is determined
by the number $n$ of arcs as well as the number $r$ of boundary
components, namely, $2-2g-r = v-e = b-n$, cf.\
Figure~\ref{F:inflation}.

Diagrams over one and two backbones are related by gluing, i.e., we have the
mapping
$$
\alpha\colon \mathbb{E} \rightarrow \mathbb{D},
$$
where $\alpha(E)$ is obtained by keeping all arcs in $E$ and connecting the
$3'$ end of $R$ and the $5'$ end of $S$, see Figure~\ref{F:gluing} (A).

%%%%%%%%%
%%%%%%%%%%%%%%%%%%%%%%%%%%%%%%%%%%%%%%%%%%%%%%%%%%
%%%%%%%%%
\begin{figure}[ht]
\centerline{\epsfig{file=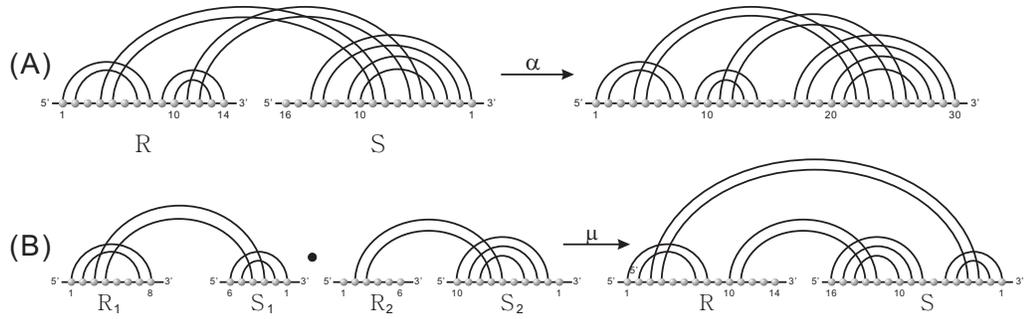,width=0.9\textwidth} \hskip8pt}
\caption{\small (A) Mapping a diagram over two backbones into a
diagram over one backbone by gluing. (B)
Mapping from two diagrams over two backbones to a diagram over two backbones by concatenating $R_2$
after $R_1$ and $S_1$ after $S_2$ preserving the orientation.
}\label{F:gluing}
\end{figure}
%%%%%%%%%
%%%%%%%%%%%%%%%%%%%%%%%%%%%%%%%%%%%%%%%%%%%%%%%%%%
%%%%%%%%%
In addition to gluing, there is another operation mapping a pairs of diagram
over two backbones into a diagram over two backbones: given two diagrams over
two backbones, $E_1,E_2\in \mathbb{E}$ we can insert $E_2$ into the gap of
$E_1$ by concatenating the backbones $R_2$ and $R_1$ and $S_1$ and $S_2$
preserving orientation.; see Figure \ref{F:gluing} (B).
This composition is by construction again a diagram over two backbones denoted
$E_1\bullet E_2$, i.e., we have a mapping
\begin{equation}
\mu\colon \mathbb{E}\times \mathbb{E}\longrightarrow \mathbb{E},
\quad \mu(E_1,E_2)=E_1\bullet E_2.
\end{equation}
It is straightforward to see that $\bullet$ is an associative product with
unit given by the diagram over two empty backbones. The product $\bullet$ is
not commutative.

\section{Shadows}
%%%
%%%%%%%%%%%%%%%%%%%%%%%%%%%%%%%%%%%%%%%%%%%%%%%%%%%%%%%%%%%%%%%%%%%%%%%%%%%%%%%
%%%

\begin{definition}  A stack in a diagram is a maximal collection of parallel arcs
of the form $(i,j),(i+1,j-1),\ldots, (i+(\ell-1),j-(\ell-1))$.  An arc is non-crossing
if there is no other arc in the diagram that crosses it, and a vertex is isolated
if it has no arcs incident upon it.
A shadow is a diagram with no non-crossing arcs or isolated vertices so that each
stack has size one, and a shadow is non-trivial provided each backbone contains at
least one paired vertex.
\end{definition}

A diagram determines a shadow by removing all non-crossing arcs, deleting all isolated vertices and
collapsing each induced stack
to a single arc as in Figure~\ref{F:shadow}.
We shall denote the shadow of a diagram $X$ by $\sigma(X)$, so
$\sigma^2(X)=\sigma(X)$.  Projecting into the shadow does not affect
genus, i.e., $g(X)=g(\sigma(X))$.
In case there are no crossing arcs, $\sigma(X)$ becomes an empty
diagram on the same number of backbones as $X$ as in Figure~\ref{F:shadow} (C).
By definition, any empty backbone contributes one boundary
component. For example, for a diagram $X$ over $b$ backbones that
contains no crossing arcs, $\sigma(X)$ is a sequence of $b$ empty backbones with
$b$ boundary components.

%%%%%%%%%%%%%%
%%%%%%%%%%%%%%%%%%%%%%%%%%%%%%%%%%%
%%%%%%%%%%%%%%
\begin{figure}[ht]
\centerline{\epsfig{file=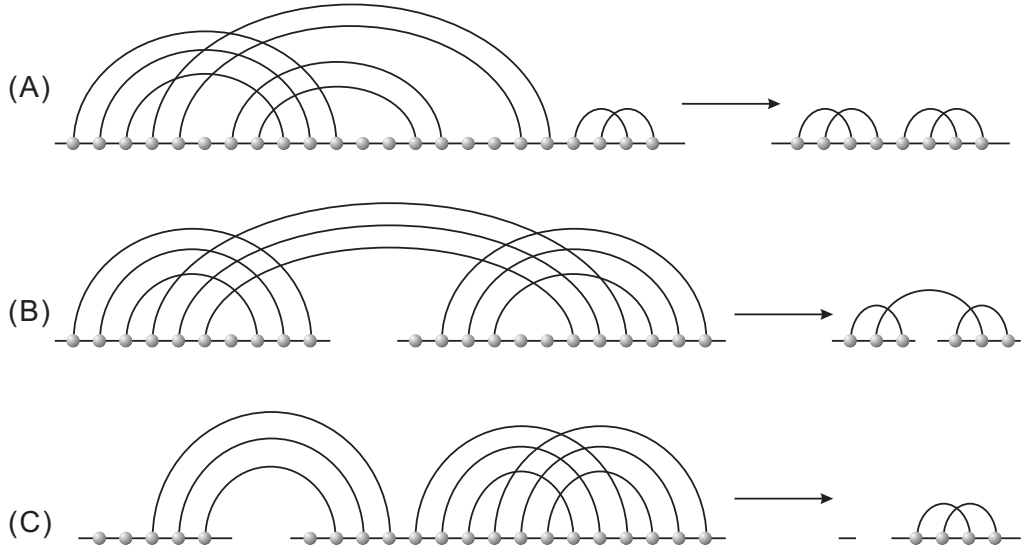,width=0.9\textwidth} \hskip8pt}
\caption{\small Shadows: (A) A diagram over one backbone and
its shadow (B) A diagram over two backbones whose shadow is again over two backbones
and (C) a shadow with an empty backbone.
}\label{F:shadow}
\end{figure}
%%%%%%%%%%%%%
%%%%%%%%%%%%%%%%%%%%%%%%%%%%%%%%%%%%%
%%%%%%%%%%%%%%%%

Let us begin by refining an observation about shadows over one
backbone from \citet{Reidys:11a}:

%%%
%%%%%%%%%%%%%%%%%%%%%%%%%%%%%%%%%%%%%%%%%%%%%%%%%%%%%%%%%%%%%%%%%%%%%%%%%%%%%%
%%%
\begin{theorem}\label{T:finiteshadows}
Shadows of genus $g\geq 1$ over one backbone have the following properties:\\
{\bf (a)}
a shadow of genus $g$ contains at least $2g$ and at most $(6g-2)$ arcs; in particular
for fixed genus, there are only finitely many shadows;\\
{\bf (b)} for any $2g\le \ell\le 6g-2$, there exists a shadow of genus $g$ containing
exactly $\ell$ arcs.
\end{theorem}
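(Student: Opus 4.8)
The plan is to reduce both bounds to the Euler-characteristic identity already recorded for the collapsed fatgraph. Passing to the polygonal model, a shadow over one backbone becomes a one-vertex map: the single vertex carries the $2n$ arc-endpoints in their natural cyclic order $\sigma=(1\,2\,\cdots\,2n)$ around the collapsed backbone, and the $n$ arcs form a fixed-point-free involution $\alpha$ on these $2n$ half-edges. Since $v=1$ and $e=n$, the relation $2-2g-r=v-e$ becomes $n=2g-1+r$, where $r$ is the number of boundary components. As $r\ge 1$ for any nonempty connected surface with boundary, this immediately gives the lower bound $n\ge 2g$, with equality exactly when $r=1$. The upper bound $n\le 6g-2$ is equivalent to $r\le 4g-1$, so everything reduces to bounding the number of boundary components.

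The boundary components are the cycles of the face permutation $\phi=\sigma\alpha$ acting on the $2n$ half-edges, and their lengths sum to $2n$. I would first rule out short cycles using the defining properties of a shadow. A cycle of length one satisfies $\phi(k)=k$, i.e.\ $\alpha(k)=k-1$, which means an arc joins the adjacent vertices $k-1$ and $k$; no arc can cross such an arc, so it is non-crossing, contradicting the shadow hypothesis. Hence every boundary component has length at least two. A cycle of length two, $\phi(a)=b$ and $\phi(b)=a$ with $a\ne b$, unwinds to $\alpha(a)=b-1$ and $\alpha(b)=a-1$, that is, to two arcs $\{a-1,b\}$ and $\{a,b-1\}$ that are cyclically parallel and bound a bigon pinched at the two corners $(a-1,a)$ and $(b-1,b)$.

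The crux is to show there is at most one such bigon. If both pinch-corners are genuine backbone gaps, the two arcs are a nested pair with consecutive endpoints, i.e.\ a stack of size two, which is excluded by the requirement that every stack have size one. The only corner that is not a genuine backbone gap is the single seam between the $3'$ end (position $2n$) and the $5'$ end (position $1$) produced by collapsing the backbone. A length-two component can therefore survive only if one of its corners is this seam, and the seam corner lies on exactly one boundary component. Hence there is at most one boundary component of length two and all others have length at least three. Writing $r_2\le 1$ for their number, the length sum gives $2n\ge 2r_2+3(r-r_2)=3r-r_2\ge 3r-1$; combined with $n=2g-1+r$ this yields $3r\le 2(2g-1+r)+1$, so $r\le 4g-1$ and $n\le 6g-2$. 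The main obstacle is precisely this $+1$: the naive bound from ``all components have length $\ge 3$'' gives only $6g-3$, and recovering the sharp constant requires the line-versus-circle asymmetry, concentrated at the seam, to be isolated and counted. The extremal case $n=6g-2$ forces one seam-bigon together with $4g-2$ triangular components; the genus-one instance on eight points with arcs $\{1,4\},\{5,8\},\{2,6\},\{3,7\}$ realizes it.

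For part (b) I would argue by explicit construction. The minimal shadow ($r=1$, $n=2g$) is a standard genus-$g$ unicellular chord diagram, and the maximal shadow ($r=4g-1$, $n=6g-2$) is the ``necklace'' generalizing the eight-point example, one seam-bigon followed by $4g-2$ triangles. To fill in every intermediate value I would use a local move that inserts one new arc inside a single boundary face while crossing a chosen existing arc: this raises both $n$ and $r$ by one and leaves the genus fixed, and it can be applied whenever $r<4g-1$. Iterating from the minimal shadow then sweeps $\ell$ through every value in $[2g,6g-2]$. Here the only real work is the bookkeeping: verifying after each insertion that the diagram is still a genuine shadow---every arc crossed by another, and no stack of size exceeding one---which is routine but must be checked at each step.
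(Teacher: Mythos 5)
Your proof follows essentially the same route as the paper's: for part (a) you use the Euler relation $n=2g+r-1$ together with the face-length bound $2n\ge 3r-1$ obtained by excluding length-one boundary components and allowing at most one length-two component (your seam/bigon argument via the permutation $\phi=\sigma\alpha$ in fact supplies the detail the paper dismisses with ``as one sees directly''), and for part (b) you use the paper's construction of starting from $2g$ mutually crossing arcs and repeatedly inserting a crossing arc that splits a boundary component, which raises $n$ and $r$ by one while preserving genus. The only real difference is explicitness: the paper pins down each insertion (from the $5'$ end, splitting off a face of length $3$ at every step, ending at the seam bigon), whereas you leave the choice of insertion and the verification of the shadow properties as deferred bookkeeping.
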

%%%
%%%%%%%%%%%%%%%%%%%%%%%%%%%%%%%%%%%%%%%%%%%%%%%%%%%%%%%%%%%%%%%%%%%%%%%%%%%%%%
%%%
\begin{proof}
First note that if there is more than one boundary component, then there
must be an arc with different boundary components on its two sides and
removing this arc decreases $r$ by exactly one while preserving $g$ since
the number of arcs is given by $n=2g+r-1$.
Furthermore, if there are $\nu_\ell$ boundary components of length
$\ell$ in the polygonal model, then $2n=\sum_\ell \ell\nu_\ell$ since
each side of each arc is traversed once by the boundary.  For a shadow,
$\nu_1=0$ by definition,  and $\nu_2\leq 1$ as one sees directly.
It therefore follows that $2n=\sum_\ell \ell\nu_\ell\geq 3(r-1)+2$, so
$2n=4g+2r-2\geq 3r-1$, i.e., $4g-1\geq r$.
Thus, we have $n=2g+(4g-1)-1=6g-2$, i.e., any shadow can contain at most
$6g-2$ arcs. The lower bound $2g$ follows directly from $n=2g+r-1$ since
$r\geq 1$.

Let $S_{2g}$ be a shadow containing $2g$ mutually crossing arcs,
i.e., each arc crosses any of the remaining $(2g-1)$ arcs. $S_{2g}$
has genus $g$ and contains a unique boundary component of
length $4g$, i.e., traversing $4g$ non-backbone arcs counted with multiplicity.
We construct a new shadow $S_{2g+1}$ of genus $g$ containing $2g+1$ arcs,
by inserting an arc crossing into $S_{2g}$ from the $5'$ end of
$S_{2g}$ such that the boundary component in $S_{2g}$ splits into
one boundary component of length $3$ and another of length $4g+2-3=4g-1$.
The latter becomes the first boundary component of $S_{2g+1}$.
The newly inserted arc is by construction crossing, splits a boundary
component and preserves genus.
We now prove the assertion by induction of the number of inserted arcs.
By the induction hypothesis, there exists a shadow $S_{2g+i}$ of genus $g$ having $2g+i$ arcs,
whose first boundary component has length $4g-i$.
Again, we insert a
crossing arc as described above thereby splitting the first boundary component
into one of length $3$ and the other of length $(4g-(i+1))$. After $i=4g-2$
such insertions, we arrive at a shadow whose first boundary component has
length $2$ while all other boundary components have length $3$.
Accordingly, there exists a set
$\{S_{2g},S_{2g+1},\ldots,S_{2g+(4g-2)}\}$ of shadows all having genus $g$, where
each $S_j$ contains $j$ arcs, see Figure~\ref{F:newbond}.
\end{proof}

%%%%%%%%%%%%%%
%%%%%%%%%%%%%%%%%%%%%%%%%%%%%%%%%%%
%%%%%%%%%%%%%%
\begin{figure}[ht]
\centerline{\epsfig{file=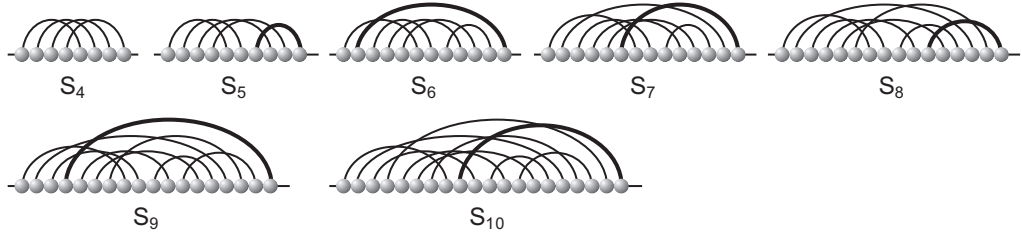,width=0.9\textwidth} \hskip8pt}
\caption{\small
Constructing the sequence of shadows $S_{\ell}$ for genus $g=2$,
see Theorem~\ref{T:finiteshadows}, for $2g=4\le \ell\le 6g-2=10$.
Newly inserted arcs are drawn bold.
}\label{F:newbond}
\end{figure}
%%%%%%%%%%%%%
%%%%%%%%%%%%%%%%%%%%%%%%%%%%%%%%%%%%%
%%%%%%%%%%%%%%%%

%%%
%%%%%%%%%%%%%%%%%%%%%%%%%%%%%%%%%%%%%%%%%%%%%%%%%%%%%%%%%%%%%%%%%%%%%%%%%%%%%%%%%%%
%%%
\begin{corollary}\label{C:22}
A shadow over two backbones has the following properties:\\
{\bf (a)}
 a shadow of genus $g\ge 1$ over two backbones contains at least $(2g+1)$
and at most $6(g+1)-2$ arcs; a shadow of genus $0$ has at least $2$ and
at most $4$ arcs.
in particular, the set of such shadows is finite;\\
{\bf (b)}
for any $(2g+1)\le \ell\le 6(g+1)-2$ in case of $g\ge 1$ and
$2\le \ell \le 4$ in case of $g=0$, there exists some shadow over two
backbones with genus $g$ containing exactly $\ell$ arcs.
\end{corollary}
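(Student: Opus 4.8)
The plan is to rerun the boundary-component bookkeeping from the proof of Theorem~\ref{T:finiteshadows} over two backbones, the one genuinely new feature being that a two-backbone shadow may carry boundary components of length one. First I record the Euler relation for $b=2$: collapsing each backbone to a single disc gives $v-e=2-n$, so $2-2g-r=2-n$ and hence $n=2g+r$. Exactly as before, each side of each arc is traversed once, so $2n=\sum_{\ell}\ell\,\nu_{\ell}$ with $\sum_{\ell}\nu_{\ell}=r$, where $\nu_\ell$ counts boundary components of length $\ell$. The departure from the one-backbone count $\nu_1=0,\ \nu_2\le 1$ is that an internal arc spanning a whole backbone (a ``rainbow''), when pierced by an external arc, produces a legitimate length-one boundary; so $\nu_1$ may now be positive.

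The heart of the argument is a bound on the short boundary components. Each collapsed backbone contributes one free boundary segment, the arc of its disc between the $3'$ and $5'$ ends; there are thus exactly two such \emph{gaps}, and each gap lies on a unique boundary component. I claim every boundary component of length $\le 2$ must contain a gap. Indeed, a gap-free boundary visits only disc segments joining consecutive arc-endpoints: a gap-free component of length $1$ would force an arc between adjacent endpoints, i.e.\ a non-crossing arc, while a gap-free component of length $2$ would force two parallel adjacent arcs, i.e.\ a stack of size $2$; both are excluded by the definition of a shadow. Since distinct short components must consume distinct gaps, this yields $\nu_1+\nu_2\le 2$, and in particular $2\nu_1+\nu_2\le 4$.

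The bounds then follow by arithmetic. Isolating the length-one and length-two contributions,
\[
2n=\sum_{\ell}\ell\,\nu_\ell\ \ge\ \nu_1+2\nu_2+3\!\!\sum_{\ell\ge 3}\!\nu_\ell\ =\ 3r-2\nu_1-\nu_2\ \ge\ 3r-4 .
\]
Substituting $n=2g+r$ gives $4g+2r\ge 3r-4$, i.e.\ $r\le 4g+4$, whence $n=2g+r\le 6g+4=6(g+1)-2$; for $g=0$ this reads $n=r\le 4$. The lower bounds are immediate: $r\ge 1$ gives $n\ge 2g+1$ when $g\ge 1$, and for $g=0$ a single arc cannot cross anything, so a shadow needs $n\ge 2$. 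This proves part~\textbf{(a)}. For part~\textbf{(b)} I would exhibit, for each admissible $\ell$, a shadow of genus $g$ with exactly $\ell$ arcs, mirroring the insertion construction of Theorem~\ref{T:finiteshadows}. The $g=0$ base cases are explicit: the two crossing external arcs $R_1S_1,\,R_2S_2$ give $\ell=2$; the two internal rainbows $R_1R_4,\,S_1S_4$ each pierced by the crossing external pair $R_2S_2,\,R_3S_3$ give $\ell=4$ (here $\nu_1=2,\ \nu_3=2$); and an analogous three-arc configuration gives $\ell=3$. Starting from a minimal shadow one repeatedly inserts a crossing arc that cuts a boundary component of length $\ge 3$ into one of length $3$ and a shorter one; each insertion raises the arc number by one, splits one boundary into two, and hence preserves the genus, so iterating realizes every value in the stated range.

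I expect the main obstacle to be the short-boundary lemma $\nu_1+\nu_2\le 2$: one must verify carefully that \emph{every} length-one and length-two boundary consumes a backbone gap, which is precisely where the shadow hypotheses (no non-crossing arcs, every stack of size one) are used, and where the two-backbone count legitimately allows the two extra monogons coming from internal rainbows. Once this is in place the arithmetic is forced, and the constructions in~\textbf{(b)} reduce to routine bookkeeping, the only care being to keep each inserted arc crossing and each stack of size one.
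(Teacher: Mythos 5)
Your proof of part \textbf{(a)} is correct, but it takes a genuinely different route from the paper's. You generalize the intrinsic boundary-length count of Theorem~\ref{T:finiteshadows} directly to two-backbone fatgraphs: the Euler relation $n=2g+r$, the identity $2n=\sum_\ell \ell\nu_\ell$, and your key new lemma that every boundary component of length $\le 2$ must contain one of the two backbone gaps (a gap-free monogon forces a non-crossing arc, a gap-free bigon forces a stack of size two), giving $\nu_1+\nu_2\le 2$, hence $2n\ge 3r-4$, $r\le 4g+4$ and $n\le 6g+4=6(g+1)-2$, with the lower bound $n\ge 2g+1$ falling out of $r\ge 1$ for free. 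The paper argues extrinsically instead: the same Euler bookkeeping shows that cutting the backbone of a one-backbone shadow changes genus by $0$ or $-1$, so every two-backbone shadow of genus $g$ arises by cutting a one-backbone shadow of genus $g$ or $g+1$; the upper bound is then imported from Theorem~\ref{T:finiteshadows} at genus $g+1$, and the lower bound comes from the observation that a $2g$-arc genus-$g$ shadow has $r=1$, so no cut of it can preserve genus. Your version is self-contained, explains structurally where the six extra arcs come from (the two gaps can each absorb a short boundary component, e.g.\ the monogon of a pierced rainbow), and gives the cleaner lower bound; the paper's reduction is shorter given Theorem~\ref{T:finiteshadows}, and the cutting correspondence it sets up pays for itself later: it is exactly how Corollary~\ref{C:seven} enumerates the seven genus-zero shadows, and it makes part \textbf{(b)} essentially free by transporting the explicit shadows $S_\ell$ of Theorem~\ref{T:finiteshadows} through suitable cuts. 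By contrast, your part \textbf{(b)} remains a sketch: the $g=0$ examples check out, but for general $g$ you still owe the starting configuration (a connected two-backbone shadow of genus $g$ with exactly $2g+1$ arcs) and the verification that each inserted arc stays crossing and creates no stack; this is roughly the level of detail at which the paper itself leaves \textbf{(b)}, so it is an incompleteness rather than an error. Finally, note that both your relation $n=2g+r$ and the paper's $2-2g-r=b-n$ presuppose connectedness, an assumption the corollary's statement leaves implicit in both treatments.
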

%%%
%%%%%%%%%%%%%%%%%%%%%%%%%%%%%%%%%%%%%%%%%%%%%%%%%%%%%%%%%%%%%%%%%%%%%%%%%%%%%%%%%%%
%%%

\begin{proof}
We first claim that any shadow of genus $g$ over two backbones can be obtained by cutting
the backbone of a shadow over one backbone having either genus $g$ or ${g+1}$.
To see this, suppose we are given a shadow of genus $g$, having $r$
boundary components and $n$ arcs so that $2-2g-r=b-n$, i.e., $g=(2+n-r-b)/2$,
where $b=1$.
Cutting the backbone then either splits a boundary component, or merges two
distinct boundary components.
Since cutting does not affect arcs and increases the number of backbones by one
we have the resulting genus
$$
g'=(2+n-(r+1)-(b+1))/2=g-1 \quad \text{or}\quad  g'=(2+n-(r-1)-(b+1))/2=g
$$
as was claimed.
We next observe that a shadow of genus $g=0$ over two backbones has at least $2$
arcs, while the maximum number of arcs contained in such a shadow is given by
$6(0+1)-2=4$.
For $g\ge 1$, it is impossible to cut a shadow of genus $g$ having
$2g$ arcs and keep the genus. Thus the shadow of genus $g$ over two backbones
has at least $2g+1$ arcs.
We can always map an arbitrary shadow over two backbones of genus $g$ via
$\alpha$ into a shadow over one backbone, whence the assertion.
Theorem~\ref{T:finiteshadows} guarantees that there are only finitely many
such shadows, and the corollary follows.
\end{proof}

%%%
%%%%%%%%%%%%%%%%%%%%%%%%%%%%%%%%%%%%%%%%%%%%%%%%%%%%%%%%%%%%%%%%%%%%%%%%%%%%%%%
%%%
\begin{corollary}\label{C:seven}
There exist exactly seven non-trivial shadows over two backbones having genus $0$.
\end{corollary}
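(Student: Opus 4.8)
The plan is to reduce everything to one-backbone shadows via the gluing map $\alpha\colon\mathbb{E}\to\mathbb{D}$ and then run an explicit finite check. Since $\alpha$ only adds the backbone edge joining the $3'$ end of $R$ to the $5'$ end of $S$, it changes neither the arcs nor their crossings; hence $\alpha(X)$ is a shadow whenever $X$ is, and the cut gap (the $R$--$S$ junction) is recoverable from $\alpha(X)$. Exactly as in the proof of Corollary~\ref{C:22}, cutting a one-backbone diagram either splits or merges a boundary component, so for a non-trivial genus-$0$ shadow $X$ over two backbones the glued diagram $\alpha(X)$ is a one-backbone shadow of genus $0$ or $1$; as $\alpha(X)$ retains the crossing arcs of $X$ it is not empty, while every genus-$0$ one-backbone shadow \emph{is} empty, so $\alpha(X)$ has genus exactly $1$. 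Thus non-trivial genus-$0$ shadows over two backbones correspond bijectively to genus-$1$ one-backbone shadows together with a marked gap whose cut (i) splits a boundary component, thereby lowering the genus to $0$, and (ii) leaves a paired vertex on each of the two resulting backbones.

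First I would list all genus-$1$ shadows over one backbone. By Theorem~\ref{T:finiteshadows} each has between $2$ and $4$ arcs, and inspecting the finitely many matchings one finds exactly four: the pair of mutually crossing arcs $\{(1,3),(2,4)\}$; the two three-arc shadows $\{(1,4),(2,5),(3,6)\}$ and $\{(1,3),(2,5),(4,6)\}$; and a single four-arc shadow $\{(1,4),(2,6),(3,7),(5,8)\}$. The two- and three-arc cases are immediate, but the four-arc case is the delicate one: most genus-$1$ four-arc matchings contain a size-$2$ stack (an adjacent parallel pair $(i,j),(i+1,j-1)$) and therefore collapse to a three-arc shadow, and ruling these out — showing that only one stack-free configuration survives — is where the real work lies.

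Next, for each of these four shadows I would traverse every internal gap, compute the genus of the resulting two-backbone diagram from $2-2g-r=b-n$ by counting boundary components $r$, and retain the cuts giving genus $0$ with both backbones paired. One checks that $\{(1,3),(2,4)\}$ has a single boundary component, so all three of its internal cuts split it and yield genus $0$ (three $2$-arc shadows, with $(|R|,|S|)=(1,3),(2,2),(3,1)$); that $\{(1,4),(2,5),(3,6)\}$ admits two genus-reducing cuts and $\{(1,3),(2,5),(4,6)\}$ exactly one (three $3$-arc shadows, with $(|R|,|S|)=(2,4),(4,2),(3,3)$); and that the four-arc shadow admits a single genus-reducing cut (one $4$-arc shadow, $(|R|,|S|)=(4,4)$). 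Since these seven diagrams have pairwise distinct $(|R|,|S|)$ within each arc count, they are pairwise non-isomorphic, giving $3+3+1=7$.

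The hard part is thus twofold: the four-arc one-backbone enumeration, where the shadow (no-stack) condition must be used to discard all but one candidate, and the careful verification — gap by gap — that a retained cut is genus-\emph{reducing} rather than genus-preserving, since only the boundary-splitting cuts descend to genus $0$.
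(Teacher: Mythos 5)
Your proposal is correct and follows essentially the same route as the paper: reduce to one-backbone shadows via the cut/glue correspondence (as in Corollary~\ref{C:22}), note that non-trivial genus-$0$ two-backbone shadows must come from genus-$1$ one-backbone shadows since genus-$0$ one-backbone shadows are empty, and then inspect the cuts of the four genus-$1$ shadows. The only difference is one of rigor, in your favor: where the paper simply says ``by inspection, there are seven possible such cuts'' and points to a figure, you carry out the inspection --- enumerating the four genus-$1$ shadows, distinguishing boundary-splitting (genus-reducing) cuts from boundary-merging ones, and checking the seven results are pairwise distinct.
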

%%%
%%%%%%%%%%%%%%%%%%%%%%%%%%%%%%%%%%%%%%%%%%%%%%%%%%%%%%%%%%%%%%%%%%%%%%%%%%%%%%%
%%%
\begin{proof}
There exists no non-trivial shadow over one backbone of genus $0$ since $0$-structures
over one backbone are secondary structures containing exclusively
non-crossing arcs.
In view of Corollary~\ref{C:22}, all non-trivial shadows over two backbones having genus
$0$ are therefore obtained by cutting the backbone of shadows of genus $1$
over one backbone. By inspection, there are seven possible such cuts as in
Figure~\ref{F:cut_backbone}.
\end{proof}

\begin{figure}[ht]
\centerline{\epsfig{file=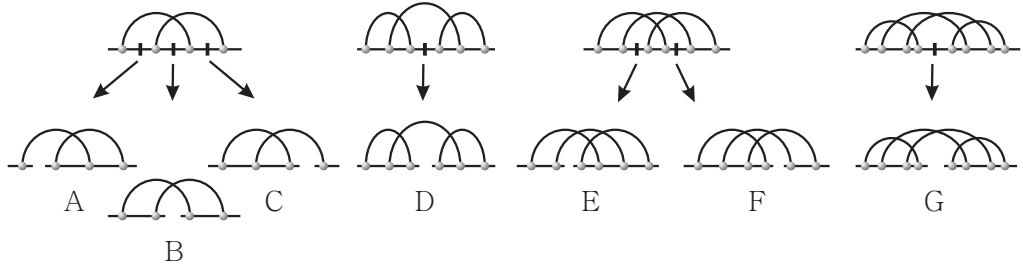,width=0.9\textwidth} \hskip8pt}
\caption{\small The shadows over two backbones having genus $0$ obtained
by cutting the four shadows of genus $1$ over one backbone.
}\label{F:cut_backbone}
\end{figure}

%%%
%%%%%%%%%%%%%%%%%%%%%%%%%%%%%%%%%%%%%%%%%%%%%%%%%%%%%%%%%%%%%%%%%%%%%%%
%%%
\section{Irreducibility}
%%%
%%%%%%%%%%%%%%%%%%%%%%%%%%%%%%%%%%%%%%%%%%%%%%%%%%%%%%%%%%%%%%%%%%%%%%%
%%%

%%%
%%%%%%%%%%%%%%%%%%%%%%%%%%%%%%%%%%%%%%%%%%%%%%%%%%%%%%%%%%%%%%%%%%%%%%%%%%%%%%%
%%%
\begin{definition}
A diagram $E$ over $b$ backbones is called irreducible if and only if it
is connected and for any two arcs, $\alpha_1,\alpha_k$ contained in $E$,
there exists a sequence of arcs $(\alpha_1,\alpha_2,\dots,\alpha_{k-1},\alpha_k)$
such that $(\alpha_i,\alpha_{i+1})$ are crossing.
\end{definition}
%%%
%%%%%%%%%%%%%%%%%%%%%%%%%%%%%%%%%%%%%%%%%%%%%%%%%%%%%%%%%%%%%%%%%%%%%%%%%%%%
%%%

We proceed by refining Theorem~\ref{T:finiteshadows}:
%%%
%%%%%%%%%%%%%%%%%%%%%%%%%%%%%%%%%%%%%%%%%%%%%%%%%%%%%%%%%%%%%%%%%%%%%%%%%%%%%%
%%%
\begin{corollary}\label{P:finiteshadows}
An irreducible shadow having genus $g=0$ over two backbones contains at least
$2$ and at most $4$ arcs, and for and $2\le \ell \le 4$,
there exists an irreducible shadow of genus $g=0$
over two backbones having exactly $\ell$ arcs.
An irreducible shadow having genus $g\ge 1$ has the following properties:\\
{\bf (a)}
every irreducible shadow with genus $g$ over two backbones contains at least
$2g+1$ and at most $6(g+1)-2$ arcs;\\
{\bf (b)}
for arbitrary genus $g$ and any $2g+1\le\ell\le 6g-2$, there exists an
\emph{irreducible} shadow of genus $g$ over one backbone having exactly $\ell$
arcs.
\end{corollary}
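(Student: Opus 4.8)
The plan is to obtain every numerical bound as an immediate specialization of Corollary~\ref{C:22}, and to concentrate the actual work on the existence assertions, which I would settle by inspecting constructions already in hand rather than building anything new. Because an irreducible shadow is in particular a shadow, the inequalities in the genus-zero case (between $2$ and $4$ arcs) and in part~(a) (between $2g+1$ and $6(g+1)-2$ arcs over two backbones) are exactly the bounds of Corollary~\ref{C:22} applied to this smaller class, so they require no separate argument.

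For the one-backbone existence in part~(b) I would revisit the inductive chain $S_{2g},S_{2g+1},\dots,S_{6g-2}$ furnished by the proof of Theorem~\ref{T:finiteshadows} and check that each term is irreducible. The seed $S_{2g}$ is a family of $2g$ mutually crossing arcs, so its crossing graph is the complete graph on $2g$ vertices and is connected; since the diagram itself is connected, $S_{2g}$ is irreducible. In the inductive step every arc added in that construction is crossing, and at the instant of insertion the only arcs present are those of the current $S_{2g+i}$; hence the new arc crosses one of them, and attaching a vertex adjacent to an already connected crossing graph preserves connectivity. Irreducibility therefore propagates along the whole chain, producing irreducible shadows of genus $g$ over one backbone for every $\ell$ in the claimed range---in fact already for $\ell=2g$, which subsumes $2g+1\le\ell\le 6g-2$.

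For the genus-zero existence over two backbones I would exploit the observation that cutting a backbone reindexes the endpoints into two strands but leaves their left-to-right order, and hence the crossing relation among arcs, completely unchanged; consequently the crossing graph is identical before and after the cut, and the cutting map of Corollary~\ref{C:22} sends irreducible shadows to irreducible shadows. Applying this to the irreducible genus-one one-backbone shadows $S_2,S_3,S_4$ obtained above, a genus-reducing cut---one that splits a single boundary component, whose availability is precisely what the enumeration behind Corollary~\ref{C:seven} records---yields irreducible shadows over two backbones of genus $0$ with $2$, $3$ and $4$ arcs respectively. Since cutting introduces no new arcs it automatically preserves the defining conditions of a shadow (no non-crossing arc, every stack of size one, no isolated vertex), so each output is a genuine irreducible shadow realizing the required value of $\ell$.

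The step I expect to demand the most care is the irreducibility bookkeeping, and specifically the interplay between the two operations. Under insertion one must insist that the new arc meets the \emph{already constructed} piece rather than merely being crossing inside some ambient diagram; this is automatic only because the arc is adjoined to the current $S_{2g+i}$ with no later arcs yet present, and it is the hypothesis that would fail if the construction of Theorem~\ref{T:finiteshadows} were reordered. Under cutting, the crucial and clean point is that the crossing graph is literally invariant, which is what lets the two-backbone statements descend from the one-backbone ones; the remaining verification---that a genus-reducing cut realizing each arc count $2,3,4$ actually exists---is routine and is already encoded in Corollary~\ref{C:seven}.
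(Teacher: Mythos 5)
Your proposal is correct and takes essentially the same route as the paper: the bounds are specialized from Theorem~\ref{T:finiteshadows} and Corollary~\ref{C:22}, part~(b) is settled by observing that the chain $S_{2g+1},\dots,S_{6g-2}$ constructed there is irreducible, and the genus-zero two-backbone existence rests on the cuts enumerated in Corollary~\ref{C:seven}. Your explicit crossing-graph arguments (connectivity under arc insertion, invariance under cutting) merely spell out what the paper asserts by inspection of Figures~\ref{F:newbond} and~\ref{F:cut_backbone}.
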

%%%
%%%%%%%%%%%%%%%%%%%%%%%%%%%%%%%%%%%%%%%%%%%%%%%%%%%%%%%%%%%%%%%%%%%%%%%%%%%%%%
%%%
\begin{proof}
Part a) follows directly from Theorem~\ref{T:finiteshadows}, and for b), the shadows
$S_{2g+1},\ldots, S_{6g-2}$ generated in the proof of
Theorem~\ref{T:finiteshadows}, are in fact
{irreducible} as in  Figure~\ref{F:newbond}.
\end{proof}

\begin{definition}Let $X$ be a diagram.  We call $S'$ an irreducible shadow of $X$ (irreducible
$X$-shadow) if and only if $S'$ is an irreducible shadow and any arc in $S'$ is
contained in $X$. Let $\mathbb{I}(X)=\{S'\subset X\mid S'\; \text{\rm is an
irreducible $X$-shadow}\;\}$.
\end{definition}

Clearly, our notion of irreducibility recovers for diagrams over one
backbone that of \citet{Reidys:11a,Bon:08}.
A diagram $D$ over one backbone can iteratively be
decomposed by first removing all non-crossing arcs as well as isolated vertices
and second by removing irreducible $D$-shadows iteratively as follows: \\
$\bullet$ one removes (i.e., cuts the backbone at two points and after removal
          merges the cut-points) irreducible $D$-shadows from bottom to top,
          i.e., such that there exists no irreducible $S$-shadow that is nested
          within the one previously removed. \\
$\bullet$ if the removal of an irreducible $D$-shadow induces the formation
          of a non-trivial stack as in Figure~\ref{F:shadow1}, then it is collapsed into a
          single arc.\\

%%%%%%%%%%%%%%
%%%%%%%%%%%%%%%%%%%%%%%%%%%%%%%%%%%
%%%%%%%%%%%%%%
\begin{figure}[ht]
\centerline{\epsfig{file=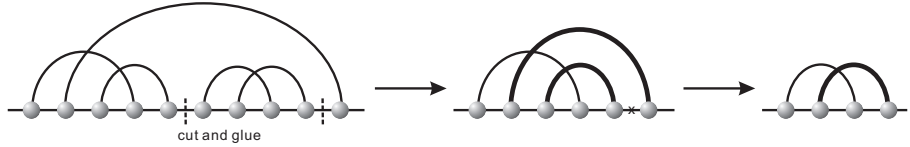,width=0.8\textwidth} \hskip8pt}
\caption{\small Removing irreducible shadows from ``bottom to top''. Any
stacks, that are induced by these removals are collapsed into single arcs.
}\label{F:shadow1}
\end{figure}
%%%%%%%%%%%%%
%%%%%%%%%%%%%%%%%%%%%%%%%%%%%%%%%%%%%
%%%%%%%%%%%%%%%%

%%%%
%%%%%%%%%%%%%%%%%%%%%%%%%%%%%%%%%%%%%%%%%%%%%%%%%%%%%%%%%%%%%%%%%%%%%%%%%%%%%%
%%%%
%%%%
%%%%%%%%%%%%%%%%%%%%%%%%%%%%%%%%%%%%%%%%%%%%%%%%%%%%%%%%%%%%%%%%%%%%%%%%%%%%%
%%%%

We next extend the decomposition of diagrams over one backbone \citet{Reidys:11a}
to diagrams over two backbones. Let $E$ be a diagram over two backbones.
By definition, irreducible $E$-shadows over two backbones are either
connected or a disjoint union of two irreducible shadows over one backbone.
Thus, $E$ can be decomposed by removing first all non-crossing arcs as
well as any isolated vertices and second all irreducible $E$-shadows in
two rounds as follows: \\
$\bullet$ remove any irreducible $E$-shadows over one backbone, from bottom
          to top, as previously described, see Figure~\ref{F:irre2}, \\
$\bullet$ remove the irreducible $E$-shadows over two backbones iteratively,
          starting with the irreducible $E$-shadow containing the leftmost
          vertex of the second backbone, see Figure~\ref{F:irre2}.

%%%%%%%%%%%%%%
%%%%%%%%%%%%%%%%%%%%%%%%%%%%%%%%%%%
%%%%%%%%%%%%%%
\begin{figure}[ht]
\centerline{\epsfig{file=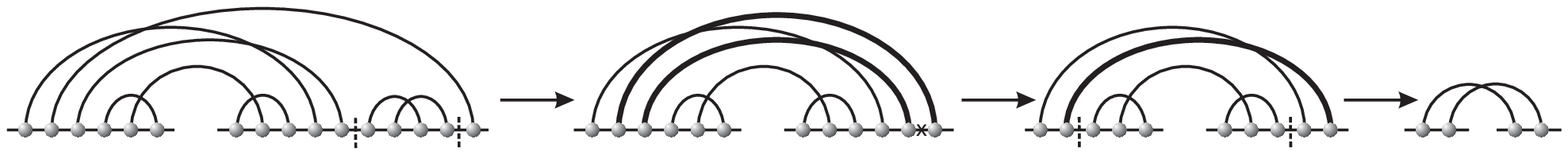,width=0.9\textwidth} \hskip8pt}
\caption{\small Decomposition of a shadow over two backbones. First, from
bottom to top, the only irreducible shadow over one backbone is removed.
During its removal, a stack of length two is induced (bold arcs),
which is projected into a single arc.
Second, the two irreducible shadows over two backbones are iteratively
removed.}
\label{F:irre2}
\end{figure}
%%%%
%%%%%%%%%%%%%%%%%%%%%%%%%%%%%%%%%%%%%%%%%%%%%%%%%%%%%%%%%%%%%%%%%%%%%%%%%%%%%%
%%%%

%%%%
%%%%%%%%%%%%%%%%%%%%%%%%%%%%%%%%%%%%%%%%%%%%%%%%%%%%%%%%%%%%%%%%%%%%%%%%%%%%%%
%%%%
\section{$\gamma$-structures over two backbones}
%%%%
%%%%%%%%%%%%%%%%%%%%%%%%%%%%%%%%%%%%%%%%%%%%%%%%%%%%%%%%%%%%%%%%%%%%%%%%%%%%%%
%%%%

%%%%
%%%%%%%%%%%%%%%%%%%%%%%%%%%%%%%%%%%%%%%%%%%%%%%%%%%%%%%%%%%%%%%%%%%%%%%%%%%%%%
%%%%
\begin{definition}
A diagram $X$ over $b$ backbones is a $\gamma$-structure over $b$ backbones if and
only if we have $g(S')\le \gamma$ for any irreducible $X$-shadow $S'$.
\end{definition}
%%%%
%%%%%%%%%%%%%%%%%%%%%%%%%%%%%%%%%%%%%%%%%%%%%%%%%%%%%%%%%%%%%%%%%%%%%%%%%%%%%%
%%%%
With foresight, we  refine the notion of irreducible $X$-shadow as follows:
\begin{eqnarray*}
\mathbb{I}_1(E) & = &
 \{ \, S'\mid \text{$S'$ is an irreducible $E$-shadow over one backbone}\,\} ,\\
\mathbb{I}_2^{i}(E) & = &
    \{ \, S'\mid \text{$S'$ is an irreducible $E$-shadow over two backbones,
               where $g(\alpha(S'))=g(S')+i$} \, \}.
\end{eqnarray*}
%%%%
%%%%%%%%%%%%%%%%%%%%%%%%%%%%%%%%%%%%%%%%%%%%%%%%%%%%%%%%%%%%%%%%%%%%%%%%%%%%
%%%%
\begin{lemma}\label{L:genus_cal}
Suppose $E$ is a $\gamma$-structure over two backbones. Then
\begin{equation}
g(E) =
\begin{cases}
\sum_{S' \in \mathbb{I}_1(E)} g(S')+
\sum_{S'\in \mathbb{I}_2^0(E)} g(S')+\sum_{S'\in \mathbb{I}_2^1(E)} (g(S')+1),
& \text{if} \quad \mathbb{I}_2^0(E)\ne \emptyset ;\\
\displaystyle{\sum_{S' \in \mathbb{I}_1(E)} g(S')+
                                     \sum_{S'\in \mathbb{I}_2^1(E)} (g(S')+1) -1},
&  \text{if} \quad \mathbb{I}_2^0(E) = \emptyset.
\end{cases}
\end{equation}
\end{lemma}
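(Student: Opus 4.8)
The plan is to transport the computation to a single backbone via the gluing map $\alpha$, where genus is additive over the irreducible decomposition, and then to account for the discrepancy between $g(E)$ and $g(\alpha(E))$ created by the single extra backbone edge that $\alpha$ inserts. Throughout I work with the shadow $\sigma(E)$, so that $g(E)=g(\sigma(E))$ and only crossing arcs remain; I take $E$ connected (so that $2-2g-r=b-n$ applies) with $\mathbb{I}_2(E)\neq\emptyset$, the regime in which the second case is meaningful.

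First I would record the effect of $\alpha$ on the decomposition. Since $\alpha$ merely concatenates the two backbones and reindexes vertices, it neither creates nor destroys crossings; hence every one-backbone irreducible $E$-shadow is unchanged and remains an irreducible $\alpha(E)$-shadow, every two-backbone irreducible $E$-shadow $S'$ is carried to the irreducible one-backbone shadow $\alpha(S')$, and distinct $E$-shadows stay distinct. Thus the irreducible shadows of $\alpha(E)$ are exactly $\mathbb{I}_1(E)\cup\{\alpha(S'):S'\in\mathbb{I}_2(E)\}$. Applying the one-backbone additivity of genus over the irreducible decomposition \citet{Reidys:11a} to $\alpha(E)$, and substituting $g(\alpha(S'))=g(S')+i$ according to the definition of $\mathbb{I}_2^i(E)$, gives
\begin{equation}\label{E:plan-glue}
g(\alpha(E))=\sum_{S'\in\mathbb{I}_1(E)}g(S')+\sum_{S'\in\mathbb{I}_2^0(E)}g(S')+\sum_{S'\in\mathbb{I}_2^1(E)}\bigl(g(S')+1\bigr),
\end{equation}
which is precisely the right-hand side of the first case.

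Next I would compare $g(\alpha(E))$ with $g(E)$. The map $\alpha$ adds a single backbone edge joining $3'(R)$ to $5'(S)$, so by $2-2g-r=b-n$ the Euler characteristic drops by one and the number of boundary components changes by exactly $\pm 1$; consequently $g(\alpha(E))=g(E)$ when $3'(R)$ and $5'(S)$ lie on one boundary component of $\sigma(E)$ (the new edge splits that component) and $g(\alpha(E))=g(E)+1$ when they lie on two distinct components (the new edge merges them). Comparing with \eqref{E:plan-glue}, the lemma reduces to the claim that $3'(R)$ and $5'(S)$ share a boundary component \emph{if and only if} $\mathbb{I}_2^0(E)\neq\emptyset$.

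This last equivalence is where I expect the real work to lie. The two-backbone irreducible shadows all straddle the $R$--$S$ junction and, by the ordering of their external endpoints, form a nested chain $T_1\supset\cdots\supset T_m$; the one-backbone shadows and the non-crossing arcs sit inside or beside them without altering connectivity at the junction. I would prove, by induction on $m$ and peeling off the outermost $T_k$, that the two boundary arcs incident to the junction become identified along a common boundary component exactly when the gluing of some $T_k$ is boundary-splitting, i.e.\ when $g(\alpha(T_k))=g(T_k)$ and hence $T_k\in\mathbb{I}_2^0(E)$. In Boolean terms the global index is the conjunction of the local indices: a single boundary-splitting $T_k$ forces $3'(R)$ and $5'(S)$ onto one component, whereas a chain in which every gluing merges boundaries keeps them separate, producing the $-1$ correction. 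Feeding this dichotomy back into \eqref{E:plan-glue} yields the two cases, and the hardest part is the careful bookkeeping of the two junction-incident boundary arcs through each nested shadow in the inductive step.
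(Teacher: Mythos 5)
Your proposal is correct and takes essentially the same approach as the paper: the paper likewise transports the computation to one backbone via $\alpha$, where genus is additive over the irreducible shadows (its eq.~(\ref{E:erni}) is your first display), then compares $g(E)$ with $g(\alpha(E))$ through the split/merge dichotomy for the gluing (its sets $\mathbb{S}_1,\mathbb{S}_2$ of subshadows whose two backbones do or do not share a boundary component), and finally decides which case occurs via the nested, associative $\bullet$-product structure of the two-backbone irreducible shadows. Your ``conjunction of local indices'' induction on the nested chain is precisely the paper's closure argument --- a product of $\mathbb{S}_2$-factors stays in $\mathbb{S}_2$ when $\mathbb{I}_2^0(E)=\emptyset$, while a single factor from $\mathbb{I}_2^0(E)\subset\mathbb{S}_1$ forces $E\in\mathbb{S}_1$ --- and the boundary-arc bookkeeping you defer as ``the hardest part'' is exactly the step the paper also asserts without detailed proof.
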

%%%%
%%%%%%%%%%%%%%%%%%%%%%%%%%%%%%%%%%%%%%%%%%%%%%%%%%%%%%%%%%%%%%%%%%%%%%%%%%%%%%%
%%%%
\begin{proof}
By construction, $\alpha(E)$ is a shadow over one backbone consisting of
irreducible components of genus at most $\gamma+1$. Thus, $\alpha(E)$
is a $(\gamma+1)$-structure and
\begin{eqnarray}\label{E:erni}
g(\alpha(E)) & = & \sum_{S'\in \mathbb{I}_1(E)}g(S')+\sum_{S'\in \mathbb{I}_2^0(E)} g(S')
+\sum_{S' \in \mathbb{I}_2^1(E)} (g(S')+1).
\end{eqnarray}
Let ${\mathbb S}_1=\mathbb{S}_1(E)$ be the set of $E$-subshadows over two backbones where the
backbones are on the same boundary component and
let ${\mathbb S}_2=\mathbb{S}_2(E)$ be those that are not. We have
\begin{equation}
g(S')=
\begin{cases}
g(\alpha(S')), & \text{iff}   \quad S'\in \mathbb{S}_1(E); \\
g(\alpha(S'))-1, & \text{iff} \quad S'\in \mathbb{S}_2(E).
\end{cases}
\end{equation}
{\it Claim $1$.} Suppose $\mathbb{I}_2^0(E)=\varnothing$, then
\begin{equation}
g(E)= \sum_{S' \in \mathbb{I}_1(E)} g(S')+\sum_{S'\in \mathbb{I}_2^1(E)} (g(S')+1) -1.
\end{equation}
To prove this, we use the operation $S_1\bullet S_2\in \mathbb{S}_2$.
By associativity of $\bullet$, we conclude
that $E$ has both backbones on the same boundary component, i.e.,
\begin{equation}
g(E)=g(\alpha(E))-1,
\end{equation}
and in view of eq.~(\ref{E:erni}), Claim $1$ follows.\\
{\it Claim $2$.} If $\mathbb{I}_2^0(E)\neq \varnothing$, then
\begin{equation}
g(E)= \sum_{S' \in \mathbb{I}_1(E)} g(S')+\sum_{S'\in \mathbb{I}_2^1(E)} (g(S')+1).
\end{equation}
We claim that $\mathbb{I}_2^0(E)\neq \varnothing$ implies $g(E)=g(\alpha(E))$.
Indeed, $\mathbb{I}_2^0(E)\neq \varnothing$ guarantees that there exists some
irreducible shadow $S_0' \in \mathbb{I}_2^0(E)$. $S_0'$ has by definition the
property $g(\alpha(S_0'))=g(S_0')$, i.e., gluing the two $S_0'$-backbones
does not merge boundary components, whence $S_0' \in \mathbb{S}_1$.
Now, at some point $S_0'$ appears as a factor in the shadow of $E$ which
implies $E\in \mathbb{S}_1$. Accordingly, we have  $g(E)=g(\alpha(E))$,
from which it follows that
\begin{equation}
g(E)= \sum_{S' \in \mathbb{I}_1(E)} g(S')+\sum_{S'\in \mathbb{I}_2^1(E)} (g(S')+1).
\end{equation}
\end{proof}

%%%
%%%%%%%%%%%%%%%%%%%%%%%%%%%%%%%%%%%%%%%%%%%%%%%%%%%%%%%%%%%%%%%%%%%%%%%%%%%%%%%
%%%

\section{A grammar for $0$-structures over two backbones}

%%%
%%%%%%%%%%%%%%%%%%%%%%%%%%%%%%%%%%%%%%%%%%%%%%%%%%%%%%%%%%%%%%%%%%%%%%%%%%%%%%%
%%%

In this section, we develop an unambiguous decomposition grammar
$\mathscr{G}_0$ for $0$-structures over two backbones or $0_2$-structures.
$0_2$-structures map via $\alpha$ into $1$-structures over one backbone
of genus zero or one.
In order to formulate $\mathscr{G}_0$, let us recall that we draw the oriented
backbones $R$ and $S$ horizontally and consecutively starting with the $5'$
end of $R$ or $R_1$ and ending with the $3'$ end of $S$ or $S_1$.
We denote a structure over two backbones by $J^{I}_{i,j;h,\ell}$, where $i$, $j$
are vertices contained in $R$ and $h$, $\ell$ are contained in $S$.
In particular, we shall write $[i,i]$ for a single vertex letting $[i,i-1]$
represent an ``empty'' backbone.
For instance, $J^I_{i,i-1;h,\ell}$ denotes the structure over one backbone on
the interval $[h,\ell]$ on $S$, where $h\le \ell$, $J^I_{i,j;h,h-1}$ denotes the
structure over one backbone on the interval $[i,j]$ on $R$, where $i \le j$,
and $J^I_{i,i-1,h,h-1}=\varnothing$.

The key building blocks of $\mathscr{G}_0$ are the following:
\begin{itemize}
\item \emph{gap-structures:}
      a gap structure $J^{G}_{i,j;h,\ell}$ is a secondary structure over
      $[i,\ell]$ with a gap from $j$ to $h$ such that $(i,\ell)$ and
      $(j,h)$ are base pairs; within the two gaps, there are no crossing arcs.
\item \emph{hybrid-structures:} a hybrid structure
      $J^{Hy}_{i_1,i_\ell;j_1,j_\ell}$ is a maximal
sequence of intermolecular interior loops consisting of exterior arcs
$R_{i_1}S_{j_1},\dots, R_{i_\ell}S_{j_\ell}$ where $R_{i_h}S_{j_h}$ is nested within
$R_{i_{h+1}}S_{j_{h+1}}$ and where the internal segments $R[i_h+1,i_{h+1}-1]$ and
$S[j_h+1,j_{h+1}-1]$ consist of single-stranded nucleotides only;
that is, a hybrid structure (hybrid) is the maximal unbranched stem-loop formed
by external arcs.
\item \emph{tight structures:}
a tight structure (TS) $J^T_{i,j;h,\ell}$ is a structure in which the four
positions, $i$, $j$, $h$ and $\ell$ are endpoints of an irreducible shadow over two backbones.
\item \emph{pre-tight structures:}
a pre-tight structure (PTS) is a structure $J^{PT}_{i,j;h,\ell}$,
containing a tight structure $J_{i_1,j;h_1,\ell}$ or a hybrid structure
$J^{{Hy}_{i_1,j;h_1,\ell}}$ for some $i_1\ge i$ and $h_1\ge h$.
\end{itemize}
%%%%%
%%%%%%%%%%%%%%%%%%%%%%%%%%%%%%%%%%
%%%%%%
\begin{figure}[ht]
\centerline{\epsfig{file=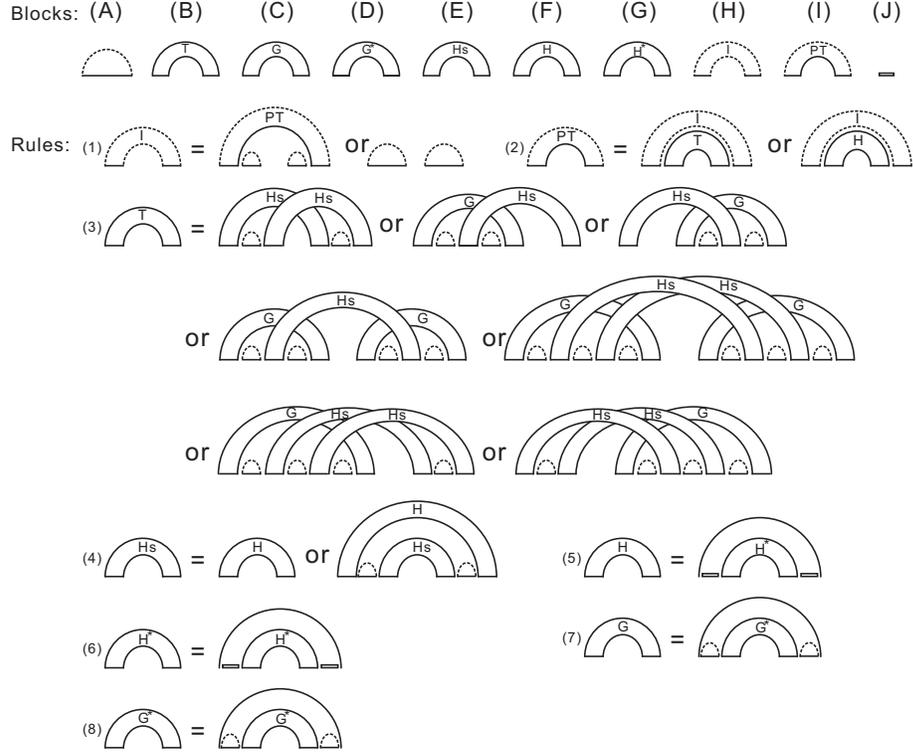,width=0.8\textwidth} \hskip8pt}
\caption{\small The grammar $\mathscr{G}_0$:
{\bf (A):} a secondary structure over $[i,j]$,
{\bf (B):} a tight structure $J^T_{i,j;r,s}$,
{\bf (C):} a gap structure $J^G_{i,j;r,s}$ over one backbone,
{\bf (D):} a substructure of a gap structure $J^{G^*}_{i,j;r,s}$ such that $(i,s)$ and
$(j,r)$ are interior arcs but itself is not a maximal gap structure,
{\bf (E):} a substructure $J^{Hs}_{i,j;r,s}$ consist of hybrid structures and secondary
structures,
{\bf (F):} a hybrid structure $J^{Hy}_{i,j;r,s}$,
{\bf (G)} a substructure $J^{Hy^*}_{i,j;r,s}$
of hybrid structure such that $(i,j)$ and $(r,s)$ are exterior arcs but itself
is not a hybrid structure because it is not maximum,
{\bf (H):} an arbitrary structure on two backbones,
{\bf (I):} a pre-tight structure $J^{PT}_{i,j;r,s}$,
{\bf (J):} an open structure consisting of unpaired bases, 
{\bf (1)--(8)}: decomposition rules for the previously defined blocks.
}\label{F:grammar2}
\end{figure}
%%%
%%%%%%%%%%%%%%%%%%%%%%%%%%%%%%%%%%%%%%%%%%%%%%%%%%%%%%%%%%%%%%%%%%%%%%%%%%%%%%%%%%%%%%
%%%
Now we are in position to formulate the production rules of $\mathscr{G}_0$,
detailed in Figure~\ref{F:grammar2}:\\

{\bf (1)}: given an arbitrary structure $J^I_{i,j;h,\ell}$, we remove starting
from $j$ and $\ell$ secondary structure blocks until an exterior arc is
encountered; such an exterior arc is contained in a pre-tight structure and
otherwise, $J^I_{i,j;h,\ell}$ contains no exterior arc and thus decomposes into
two disjoint secondary structures; \\
{\bf (2)}: the decomposition of pre-tight structures $J^{PT}_{i,j;h,\ell}$: if
$R_jS_{\ell}$ is an exterior arc, then it is decomposed into a hybrid
$J^{{Hy}}_{i_1,j;h_1,\ell}$ and an arbitrary substructure $J^I_{i,i_1-1;h,h_1-1}$;
otherwise, it is decomposed into a tight structure $J^T_{i_1,j;h_1,\ell}$ and an
arbitrary structure $J^I_{i,i_1-1;h,h_1-1}$; \\
{\bf (3)}: in case of tight structures depending on which type of shadow is
contained in the tight structure, there are $7$ ways to disect into maximal
gap structures and hybrid-structures (which in turn collapses into interior
and exterior arcs of the irreducible shadow, respectively),
as well as secondary structures;\\
{\bf (4)}: a substructure $J^{Hs}_{i,j;h,\ell}$ consists of hybrids and secondary
structures, where each hybrid structure is maximal.;\\
{\bf (5)}: a maximal hybrid structure $J^{{Hy}}_{i,j;h,\ell}$ is decomposed into an
exterior arc $R_iS_h$ and a non-maximal hybrid structure
$J^{{Hy}^*}_{i_1,j;h_1,\ell}$ with $i<i_1<j$ and $h<h_1<\ell$; \\
{\bf (6)}: a non-maximal hybrid structure $J^{{Hy}^*}_{i,j;h,\ell}$ is decomposed
into an exterior arc $R_iS_h$ and a non-maximal hybrid structure
$J^{{Hy}^*}_{i_1,j;h_1,\ell}$ with $i<i_1<j$ and $h<h_1<\ell$.;\\
{\bf (7)}: a maximal gap structure $J^G_{i,j;h,\ell}$ is decomposed via the
context-free grammar for secondary structures assuming that there is a
virtual hairpin loop in $[j,h]$; note that the substructure decomposed by a
maximal gap structure is no longer maximal; we use $J^{G^*}_{i,j;h,\ell}$ to
denote such a non-maximal gap structure derived via this decomposition; \\
{\bf (8)}: a non-maximal gap structure $J^{G^*}_{i,j;h,\ell}$ is decomposed similarly to
the decomposition of a maximal gap structure.

%%%
%%%%%%%%%%%%%%%%%%%%%%%%%%%%%%%%%%%%%%%%%%%%%%%%%%%%%%%%%%%%%%%%%%%%%%%%%%%%%%
%%%
\begin{lemma}\label{L:cover}
Any $0$-structure over two backbones can uniquely be decomposed via
$\mathscr{G}_0$, and any diagram generated by $\mathscr{G}_0$ is a
$0$-structure over two backbones.
\end{lemma}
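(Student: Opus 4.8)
The plan is to prove both halves of the biconditional separately: (soundness) every diagram produced by $\mathscr{G}_0$ is a $0$-structure over two backbones, and (completeness with uniqueness) every $0$-structure over two backbones is generated by exactly one derivation. For soundness, I would argue by structural induction on the production rules, showing that each rule preserves the genus-zero property of every irreducible shadow. The key invariant is that the only rule introducing cross-serial interactions is rule (3), which disects a tight structure into maximal gap and hybrid structures according to one of the seven genus-zero irreducible shadows classified in Corollary~\ref{C:seven}. Since each tight structure is by definition built on an irreducible shadow over two backbones with exactly the endpoints $i,j,h,\ell$, and since all other building blocks (secondary structures, hybrids, gap structures) contain no irreducible shadow of positive genus, Lemma~\ref{L:genus_cal} guarantees that any diagram assembled this way has all its irreducible $E$-shadows of genus $0$, hence is a $0$-structure.

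For the completeness-and-uniqueness direction, I would take an arbitrary $0$-structure $E$ over two backbones and run the decomposition deterministically, verifying at each stage that exactly one rule applies and that it strips off a well-defined outermost block. First I would invoke rule (1): scanning inward from the $3'$ ends $j$ and $\ell$, one deterministically removes maximal secondary-structure blocks until the first exterior arc is met; the position of that first exterior arc is canonically determined by $E$, so the split into a pre-tight structure plus leftover secondary structures is unique. The pre-tight structure then either begins with an exterior arc (forcing rule (2) to peel off a maximal hybrid via rules (5)--(6)) or begins with a nested irreducible shadow (forcing a tight structure via rule (3)); these two cases are mutually exclusive and detectable from $E$, giving uniqueness at this branch. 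Inside a tight structure, the containment of exactly one of the seven genus-zero shadows (Corollary~\ref{C:seven}) determines which of the seven disections in rule (3) fires, and the maximality clauses on gap structures (rule (7)) and hybrids (rule (5)) pin down the boundaries of each sub-block uniquely.

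The main technical obstacle, and the step I would spend the most care on, is establishing that the seven-way case split in rule (3) is genuinely unambiguous, i.e.\ that a tight structure's underlying irreducible shadow is uniquely one of the seven from Corollary~\ref{C:seven} and that the induced partition into maximal gap and hybrid structures is forced. The subtlety is that, after collapsing stacks, an irreducible shadow over two backbones has its interior arcs realized by gap structures and its exterior arcs realized by hybrids (as noted in rule (3)); I must check that the maximality conventions make this collapse a bijection, so that no tight structure admits two distinct shadow assignments. I would verify this by exhibiting, for each of the seven shadows, the explicit interleaving pattern of gap and hybrid regions along the two backbones and confirming that these seven patterns are pairwise distinguishable by the combinatorics of which endpoints $i,j,h,\ell$ are joined by interior versus exterior arcs.

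Finally, I would confirm termination: each application of a rule strictly decreases either the number of vertices or the number of arcs of the remaining structure (secondary-structure removal, arc extraction, or shadow collapse all reduce size), so the deterministic decomposition halts at the empty diagram $J^I_{i,i-1;h,h-1}=\varnothing$. Combined with the case-by-case uniqueness of each branch, this yields a unique derivation tree for every $0$-structure over two backbones, completing the proof that $\mathscr{G}_0$ is both sound and unambiguous.
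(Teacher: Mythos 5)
Your proposal is correct and follows essentially the same route as the paper: unique decomposition of the crossing-free parts via the secondary-structure/hybrid grammar, identification and removal of tight structures from the inside out using the seven genus-zero shadows of Corollary~\ref{C:seven} (with uniqueness forced by maximality of the gap and hybrid blocks), and soundness by observing that the only rule creating cross-serial arcs is the tight-structure dissection, which does so according to a genus-zero shadow. The only cosmetic differences are that the paper organizes completeness as an induction on the number of irreducible shadows rather than as a deterministic run plus a termination argument, and that your appeal to Lemma~\ref{L:genus_cal} in the soundness step is unnecessary and slightly off-target --- that lemma computes the total genus of a structure from its irreducible shadows, whereas what is needed (and what your ``key invariant'' already supplies) is that composition by nesting and concatenation creates no new irreducible shadows beyond those inside tight structures.
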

%%%
%%%%%%%%%%%%%%%%%%%%%%%%%%%%%%%%%%%%%%%%%%%%%%%%%%%%%%%%%%%%%%%%%%%%%%%%%%%%%%%
%%%
\begin{proof}
First, we show that a $0_2$-structure can uniquely be decomposed into blocks
containing exclusively non-crossing arcs. We shall establish this by induction
on the number of its irreducible shadows. \\
\emph{Induction basis:} any $0_2$-structure over two backbones that contains no
shadow of genus zero over two backbones exhibits no crossing arcs. Namely, it
contains only blocks that are either secondary structures or hybrids.
Accordingly, such a structure can be decomposed uniquely via the context-free
grammar of secondary structures or the unique decomposition of
hybrid-structures.\\
\emph{Induction step:}
Suppose $E_m$ is a $0_2$-structure containing $m\geq 1$ irreducible shadows
over two backbones of genus $0$.
We decompose from ``inside to outside'', i.e., from the $3'$-end of $R$ and
the $5'$-end of $S$. Suppose we encounter a substructure $S$ which collapses
into an irreducible shadow over two backbones of genus $0$.
$S$ itself determines a unique maximal tight structure, $T_S$, such that
$\sigma(T_S)=S$. Removing $T_S$ from $E_m$ yields a diagram $E_{m-1}$ over two backbones
containing $m-1$ irreducible shadows over two backbones of genus
$0$. The induction hypothesis guarantees the unique decomposition of
$E_{m-1}$ via $\mathscr{G}_0$. \\
It remains to show how to decompose tight structures: the shadow
of a tight structure is by construction irreducible and is given by one of the seven
irreducible shadows over two backbones described in Corollary~\ref{C:seven}.
In order to decompose a tight structure, we dissect it into maximal gap
structures and hybrid-structures (which in turn collapse into interior and
exterior arcs of the irreducible shadow, respectively), as well as secondary
structures.
All of these elements are $\mathscr{G}_0$-blocks that do not contain any
crossing arcs and can therefore be decomposed via a modified version of the
context-free grammar of secondary structures, described above.
Accordingly, there are seven ways to uniquely decompose a tight structure
into blocks containing exclusively non-crossing arcs.\\
Finally,  we show that $\mathscr{G}_0$ generates only $0_2$-structures.
By construction, $\mathscr{G}_0$ constructs tight structures via secondary
structure blocks, gap-structures and hybrid-structures. It furthermore
generates via the insertion of secondary structure blocks, hybrid structures
and tight structures. Thus, any structure generated by $\mathscr{G}_0$ is a
$0_2$-structure, whence the lemma.
\end{proof}

%%%
%%%%%%%%%%%%%%%%%%%%%%%%%%%%%%%%%%%%%%%%%%%%%%%%%%%%%%%%%%%%%%%%%%%%%%%%%%%%%%%%%%%
%%%
\begin{theorem}
The grammar $\mathscr{G}_0$ has the following properties:\\
{\bf (a)} $\mathscr{G}_0$ is unambiguous;\\
{\bf (b)} $\mathscr{G}_0$ allows computation of the partition function, base pairing
          probabilities, the probability of hybrid-blocks, gap-structures and
          Boltzmann sampling of $0_2$-structures,\\
{\bf (c)} $\mathscr{G}_0$ has a time $O(n^6)$ and space $O(n^4)$ complexity
          for generating the partition function of $0_2$-structures.
\end{theorem}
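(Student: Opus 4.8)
The plan is to treat the three assertions in turn, with part (a) carrying the conceptual weight and parts (b) and (c) following from (a) by the standard dynamic-programming machinery. For \textbf{(a)}, unambiguity means that every $0_2$-structure admits exactly one parse tree under $\mathscr{G}_0$, which is precisely the uniqueness half of Lemma~\ref{L:cover}; the task is to verify that at each production both the rule applied and its split points are \emph{forced} by the structure. I would argue rule by rule: in rule (1), scanning inward from $j$ and $\ell$ strips a maximal secondary block, so the first exterior arc---and hence the onset of the pre-tight structure---is uniquely located; in rule (2), the predicate ``$R_jS_\ell$ is an exterior arc'' is an intrinsic property of the structure, deterministically selecting the hybrid branch over the tight branch; in rule (3), the seven irreducible shadow types of Corollary~\ref{C:seven} are mutually exclusive, so the shadow type, and with it the dissection into maximal gap- and hybrid-structures, is determined by the structure; and in rules (5)--(8) each step peels off a single outermost exterior arc or closing interior pair, unique by maximality. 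Assembling these observations yields a unique derivation, establishing (a).

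For \textbf{(b)}, I would attach to each nonterminal $J^\bullet_{i,j;h,\ell}$ a partition-function entry $Z^\bullet_{i,j;h,\ell}$ equal to the Boltzmann-weighted sum over all substructures it generates, and translate the productions into recursions by replacing alternative productions with sums and concatenations with energy-weighted products of the corresponding entries. Unambiguity from (a) guarantees that each $0_2$-structure contributes exactly once, so the recursion computes the true partition function $Z$ as the top-level entry $Z^I$ over the full pair of backbones. Base-pairing probabilities, together with hybrid-block and gap-structure probabilities, then follow from the inside--outside (forward--backward) algorithm on the same grammar, and Boltzmann sampling follows from stochastic backtracking through the filled tables, at each step choosing a production with probability proportional to its contribution.

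For \textbf{(c)}, the space bound is immediate: there are finitely many nonterminal types, each tabulated over the four indices $i,j,h,\ell\in[n]$, for $O(n^4)$ entries in all. For the time bound I would inspect each production and count the summation indices required to fill a single entry. Rule (2) splits $J^{PT}_{i,j;h,\ell}$ into a hybrid or tight structure together with an arbitrary substructure $J^I_{i,i_1-1;h,h_1-1}$, introducing the two free indices $i_1,h_1$; this costs $O(n^2)$ per entry and $O(n^4)\cdot O(n^2)=O(n^6)$ overall, the expected bottleneck. \textbf{The main obstacle is rule (3), the dissection of a tight structure into its seven shadow types:} since a genus-$0$ shadow over two backbones may carry up to four arcs, a naive dissection exposes as many as four inner junction indices and would cost $O(n^8)$. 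I would resolve this by precomputing the four-indexed gap-structure tables (themselves computable within the same budget via rules (7)--(8)) and then evaluating each shadow type by a \emph{staged} summation that contracts the inner indices one or two at a time against already-tabulated four-index blocks, so that no intermediate step holds more than six live indices. Verifying that all seven shadow types admit such an ordering is exactly the case analysis that confines the whole algorithm to $O(n^6)$ time.
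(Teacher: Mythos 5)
Your proposal follows the paper's own route in all three parts: for (a) you invoke the uniqueness statement of Lemma~\ref{L:cover}, which is literally what the paper does; for (b) you use the standard translation of an unambiguous grammar into partition-function recursions (sums over alternatives, products over concatenations), backward recursions for probabilities, and stochastic backtracking for Boltzmann sampling, all of which match the paper; and for (c) your space bound and your identification of the tight-structure rule as the time bottleneck coincide with the paper's analysis.

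The one place where your argument is thinner than the paper's is the heart of (c). You correctly propose a ``staged'' contraction of inner indices against precomputed four-index tables, but you leave unverified the claim that all seven shadow dissections admit such a staging --- and that verification is exactly the technical content the paper supplies: it defines five concrete intermediate blocks $J^U_{i,j;h,\ell}$, $J^V_{i,j;h,\ell}$, $J^W_{i,j;h,\ell}$, $J^X_{i,j;h,\ell}$, $J^Y_{i,j;h,\ell}$ (each storing the product of a gap or hybrid block with secondary structures or with a previously defined intermediate block) and rewrites rules {\bf (3)} and {\bf (4)} as {\bf (3')} and {\bf (4')} (Figure~\ref{F:intermediate}), so that every production involves at most two nonterminals, hence at most four fixed outer indices plus two free split indices. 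Two further inaccuracies are worth flagging. First, you underestimate the naive cost of rule {\bf (3)}: a genus-zero shadow over two backbones can have four arcs, each of which expands to a four-index gap or hybrid block, and the paper puts the naive complexity at $O(n^{16})$, not $O(n^8)$. Second, you overlook rule {\bf (4)} (the decomposition of $J^{Hs}_{i,j;h,\ell}$ into maximal hybrids and secondary structures), which the paper also singles out as over budget ($O(n^8)$ naively) and repairs with the same intermediate blocks. Neither slip is fatal, since your staged-summation device handles both rules once the case analysis is actually carried out, but as written your proposal asserts rather than establishes the $O(n^6)$ bound.
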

%%%
%%%%%%%%%%%%%%%%%%%%%%%%%%%%%%%%%%%%%%%%%%%%%%%%%%%%%%%%%%%%%%%%%%%%%%%%%%%%%%%%%%%
%%%
\begin{proof}
Assertion {\bf (a)} follows from Lemma~\ref{L:cover}. Consequently, $\mathscr{G}_0$
can be employed to count $0_2$-interaction structures over two backbones for given
sequences $R$ and $S$ as well as to compute the partition function
\begin{equation*}
Q=\sum_{s\in \mathfrak{J_{R,S}}} e^{-G(s)/RT}
\end{equation*}
of $0_2$-structures, where $R$ is the universal gas constant, $T$ is the temperature, $G(s)$ is
energy of structure $s$ over sequence $x$, and $\mathfrak{J_{R,S}}$ is the set
of $0$-interaction structures in which all base pairs $(i,j)$ satisfy the base pairing
rules for RNA, i.e., $(i,j)\in \{AU,UA,GC,CG,GU,UG\}$.

As for assertion {\bf (b)}, let $N_{i,j;h,\ell}$ denote the substructure
represented by the nonterminal symbol $N$ in $\mathscr{G}_0$ over $[i,j]$
and $[h,l]$, where $N=\{I,PT,T,Hs,{Hy},{Hy}^*,G,G^*\}$.
Note that secondary structures are presented by an arbitrary structure $I$
setting one backbone empty.
For each of these symbols, we introduce corresponding partial partition functions
$Q_{N_{i,j;h\ell}}$. Since $\mathscr{G}_0$ is unambiguous, the recursions for the
partial partition
functions are derived by replacing minima by sums and addition of energy
contribution by multiplication of partial partition functions, see
e.g., \citet{Voss:06}. For instance, the recursion for the partition functions
corresponding to the nonterminal symbol $PT$ reads
\begin{equation*}
 Q_{J^{PT}_{i,j;h.\ell}} =\sum_{k_1,k_2} Q_{J^I_{i,k_1;h,k2,}}\times Q_{J^T_{k_1+1,j;k_2+1;\ell}}
 +\sum_{k_1,k_2} Q_{J^I_{i,k_1;h,k2,}}\times Q_{J^{{Hy}}_{k_1+1,j;k_2+1;\ell}}.
\end{equation*}
The probabilities $\mathbb{P}_{N_{i,j;h,\ell}}$ of partial substructures of type $N$
are readily calculated from the
partial partition functions. These ``backward recursions'' are analogous to
those derived by \citet{McCaskill} for secondary structures without crossings.  It follows that we have
\begin{equation*}
\mathbb{P}_{N_{i,j}}=\sum \mathbb{P}_s,
\end{equation*}
where the sum is over
all $0_2$-interaction structures containing
$N_{i,j;h,\ell}$.

Suppose $N_{i,j;h,\ell}$ is obtained by decomposing
$\theta_s$. The conditional probabilities $\mathbb{P}_{N_{i,j;h,\ell}|\theta_s}$ are then
given by $Q_{\theta_s}(N_{i,j;h,\ell})/Q_{\theta_s}$, where $Q_{\theta_s}$ represents the
partition function of $\theta_s$ and $Q_{\theta_s}(N_{i,j;h,\ell})$ represents the
partition functions for those $\theta_s$-configurations that contain $N_{i,j;h,\ell}$.
Taking the sum over all possible $\theta_s$, we obtain
\begin{equation*}
 \mathbb{P}_{N_{i,j;h,\ell}}=\mathbb{P}_{\theta_s}
\frac{Q_{\theta_s}(N_{i,j;h,\ell})}{Q_{\theta_s}}.
\end{equation*}
From this backward recursion, one immediately derives a stochastic backtracing
recursion from the probabilities of partial structures that generates a
Boltzmann sample of $0$-interaction structures; see
\citet{Tacker:96a,Ding:03,Huang:10a} for similar constructions.
The basic data structure for this sampling is a stack $A$ which stores blocks
of the form $(i,j;r,s,N)$, presenting interaction substructures of nonterminal
symbols $N$. $L$ is a set of base pairs storing those removed by the
decomposition step in the grammar. We initialize with the block $(1,n,I)$
in $A$, and $L=\varnothing$. In each step, we pick up one element in $A$
and decompose it via the grammar with probability $Q^M/Q^N$, where $Q^N$
is the partition function of the block which is picked up from $A$, and
$Q^M$ is the partition function of the target block which is decomposed by
the rewriting rule. The base pairs which are removed in the decomposition step
are moved to $L$. For instance for the decomposition rule of $J^{PT}_{i,j;h,\ell}$,
decomposing block $(i,j,PT)$ into the two blocks:
$(i,k_1;h,k_2,I)$ and $(k_1+1,j;k_2+1,\ell,T)$,
for fixed indices $k_1$, $k_2$, the probability of decomposing $(i,j,PT)$ reads
\begin{equation*}
\mathbb{P}_{k_1,k_2}=\frac{Q_{J^I_{i,k_1;h,k_2}}\times Q_{J^{T}_{k_1+1,j;k_2+1,\ell}}}
{Q_{J^{PT}_{i,j;h,\ell}}}.
\end{equation*}
The sampling step is iterated until $A$ is empty. The resulting
$0_2$-interaction structure is given by the list $L$ of base pairs.
The probability of hybrid-structures can be calculated since a hybrid structure
is by construction a block in the grammar, see \citet{Huang:10a}.
The probability of interactions involving a fixed interval $[i,j]$ is given by
$$
\mathbb{P}^{\mathsf{target}}_{[i,j]}=\sum_{h,\ell}\mathbb{P}^{{Hy}}_{i,j;h.\ell}.
$$
A gap structure, representing a maximal non-crossing stem on either backbone
is also a $\mathscr{G}_0$-block, whence its probability is readily computable.
Similarly, the probability of parings within the same backbone for a fixed
interval $[i,j]$ can be expressed as:
$$
\mathbb{P}^{\mathsf{paring}}_{[i,j]}=\sum_{h,\ell}\mathbb{P}^{G}_{i,j;h.\ell}.
$$
In order to prove assertion {\bf (c)}, we observe that any product of two blocks
has $O(n^6)$ time complexity. We conclude from this that all $\mathscr{G}_0$-rules,
except for {\bf (3)} and {\bf (4)} are of $O(n^6)$ time complexity. It thus remains
to analyze {\bf (3)} and {\bf (4)}\footnote{which are in fact  $O(n^{16})$ for
{\bf (3)} and $O(n^8)$ for {\bf (4)} time complexity as it stands}.
To this end, we introduce intermediate blocks whose function is
transitional storage.
\begin{itemize}
\item[1.] $J^U_{i,j;h,\ell}$ stores the result of the product $J^{{Hy}}_{i,i_1,h,h_1}$
and two secondary structure over interval $[i_1+1,j]$ and $[h_1+1,\ell]$ with
$i\le i_1\le j$ and $h\le h_1\le \ell$.
\item[2.] $J^V_{i,j;h,\ell}$ stores the result of the product $J^G_{i,i_1;h_1,\ell}$
and two secondary structure over interval $[i_1+1,j]$ and $[h+1,h_1]$ with
$i< i_1\le j$ and $h\le h_1< \ell$.
\item[3.] $J^W_{i,j;h,\ell}$ stores the result of the product $J^V_{i,i_1;j_1,j}$
and $J^{{Hy}}_{i_1+1,j_1-1;h,\ell}$ with
$i<i_1<j_1<j$.
\item[4.] $J^X_{i,j;h\ell}$ stores the result of the product $J^U_{i,i_1;h_1,\ell}$
and $J^{{Hy}}_{i_1+1,j;h,h_1-1}$ with $i<i_1<j$ and $h<h_1<\ell$.
\item[5.] $J^Y_{i,j;h\ell}$ stores the result of the product $J^V_{i,i_1;j_1,j}$
and $J^X_{i_1+1,j_1-1;h,\ell}$ with
$i<i_1<j_1<j$.
\end{itemize}
By virtue of these new blocks, we may rewrite {\bf (3)} and {\bf (4)} in terms of
{\bf (3')} and {\bf (4')} as displayed in
Figure~\ref{F:intermediate}.
%%%%%%%%%
%%%%%%%%%%%%%%%%%%%%%%%%%%%%%%%%%%%%%%%%%%%%%%%%%%
%%%%%%%%%
\begin{figure}[ht]
\centerline{\epsfig{file=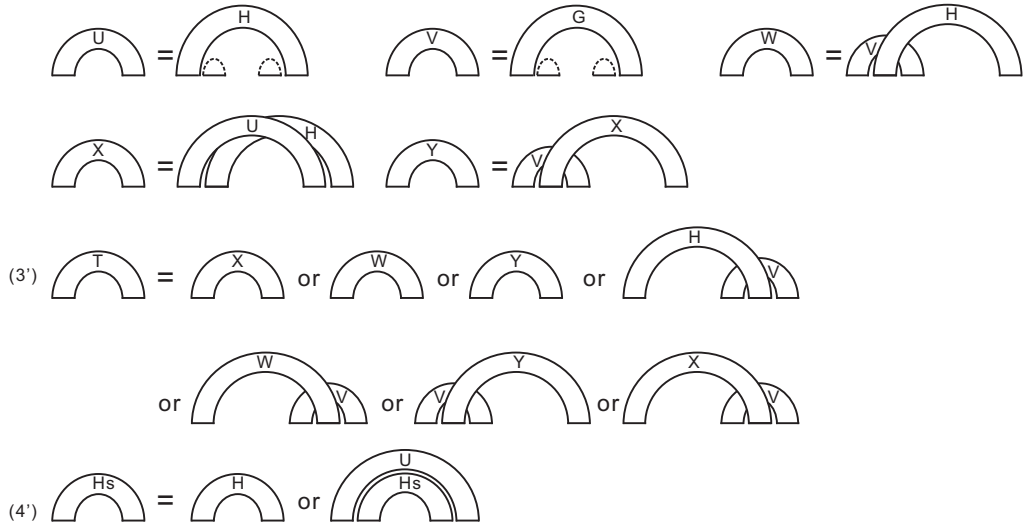,width=0.9\textwidth} \hskip8pt}
\caption{\small
The decomposition of $J^T_{i,j;h,\ell}$ and $J^{Hs}_{i,j;h,\ell}$ via the five
intermediate blocks $J^{U}_{i,j;h,\ell}$, $J^{V}_{i,j;h,\ell}$, $J^{W}_{i,j;h,\ell}$,
$J^{X}_{i,j;h,\ell}$ and $J^{Y}_{i,j;h,\ell}$.
They allow the decomposition of $J^T_{i,j;h,\ell}$ and $J^{Hs}_{i,j;h,\ell}$
with $O(n^6)$ time complexity.
}\label{F:intermediate}
\end{figure}
%%%%%%%%%
%%%%%%%%%%%%%%%%%%%%%%%%%%%%%%%%%%%%%%%%%%%%%%%%%%
%%%%%%%%%
After including these five intermediate blocks, we obtain two additional,
nonterminal symbols in each decomposition rule. Since it requires two free
variables to have the product of two nonterminal symbols and at most four
variables to describe the two blocks, the decompositions in this form are of
$O(n^6)$ time complexity. We use at most $4$-dimensional matrices to store
the blocks in $\mathscr{G}_0$, whence the $O(n^4)$ space complexity.
\end{proof}

\section{Discussion}

In this paper,  we have introduced the toplogical filtration of RNA
interaction structures and developed the notions of shadows,
irreducibility and $\gamma$-structures for them. Shadows are of particular importance for the
minimum free energy folding since they represent the basic
motifs of genus $g$. Since we have proved that for any genus
there are always finitely many such shadows, it is therefore
in principle possible to assign them individual energies, which
would presumably lead to high specificity.

The simplest topological class of RNA interaction structures is
that of $0$-structures over two backbones.
This is the two-backbone analogue of the classical RNA secondary
structures. Despite their simple irreducible shadows
(Corollary~\ref{C:seven}), $0$-structures over two backbones
exhibit features not present in the AP-structures of
\citet{Pervouchine:04,Alkan:06}. Namely, they allow for
pseudoknots formed by exterior arcs as reported, for instance,
in Homo sapiens ACA27 snoRNA, see Figure~\ref{F:chy} and
Figure~\ref{F:dis}.

Let us next compare AP-structures and $0$-structures over two
backbones in more detail.  Recall that
an AP-structure, $J(R,S,I)$, is a graph such that
\begin{enumerate}
\item $R$, $S$ are secondary structures,
\item $I$ is a set of exterior arcs without external pseudoknots,
\item $J(R,S,I)$ contains no zig-zags.
\end{enumerate}
A tight AP-structure ($R(TS)$) is a substructure that cannot
be decomposed via block decomposition \citet{rip:09,Huang:10a}.
Accordingly, the shadow of a $R(TS)$ is connected and hence
irreducibile. $R(TS)$ and tight structures of $0$-structures
over two backbones are distinct concepts.
We have already observed that $0$-structures over two backbones
are not contained in the set of AP-structures.
Likewise, AP-structures are not contained in the set of $0$-structures over two
backbones, for example, consider a shadow of a $0$-structure over two backbones
which consist of $3<x$ distinct, irreducible shadows over two backbones
having genus $0$. According to Lemma.~\ref{L:genus_cal}, the genus of this
diagram is $x-1$. Drawing an interior arc covering the $R$-endpoints of
these $x$ shadows tightly, the resulting diagram is by construction a $R(TS)$ as in
Figure~\ref{F:dis}. As inserting a single arc changes the genus at
most by one, the diagram, $R(TS)$, has genus $\ge 1$, has an irreducible shadow
and is consequently not a $0$-structure over two backbones.
%%%%%%%%%
%%%%%%%%%%%%%%%%%%%%%%%%%%%%%%%%%%%%%%%%%%%%%%%%%%
%%%%%%%%%
\begin{figure}[ht]
\centerline{\epsfig{file=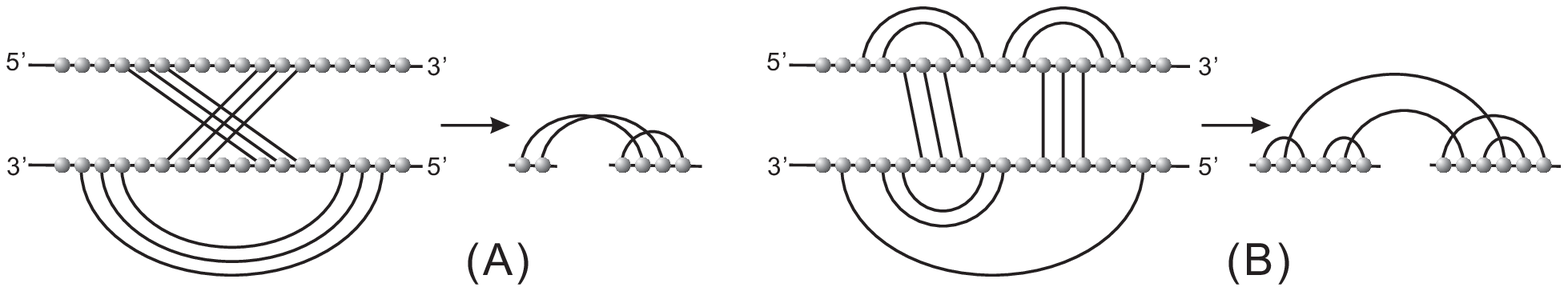,width=0.9\textwidth} \hskip8pt}
\caption{\small (A): a $0$-structure over two backbones that is {\it not} an
AP-structure; the crossing hybrid.
(B): an AP-structure that is {\it not} a $0$-structure over two backbones;
this structure contains an irreducible shadow over two backbones of
genus $1$.
}\label{F:dis}
\end{figure}
%%%%%%%%%
%%%%%%%%%%%%%%%%%%%%%%%%%%%%%%%%%%%%%%%%%%%%%%%%%%
%%%%%%%%%

\end{document}